\newtheorem{thm}{Theorem}
\newtheorem{prop}{Proposition}
\newtheorem{rem}{Remark}
\newtheorem{exmp}{Example}
\newtheorem{lem}{Lemma}
\begin{document}
\title{On the limit behavior of metrics in continuity method to K\"{a}hler-Einstein problem in toric Fano case}
\author{Chi Li}
\date{}
\maketitle

\begin{abstract}
\noindent ABSTRACT: This is a continuation of paper \cite{Li}. On
any toric Fano manifold, we discuss the behavior of limit metric of
a sequence of metrics, which are solutions to a continuity family of
complex Monge-Amp\`{e}re equations in K\"{a}hler-Einstein problem.
We show that the limit metric satisfies a singular complex
Monge-Amp\`{e}re equation. This shows the conic type singularity for
the limit metric. The information of conic type singularities can be
read from the geometry of the moment polytope.
\end{abstract}

\section{Introduction}
Let $(X,J)$ be a Fano manifold, that is, $K_X^{-1}$ is ample. Fix a reference K\"{a}hler metric $\omega\in c_1(X)$. Its Ricci curvature
$Ric(\omega)$ also lies in $c_1(X)$. So there exists $h_\omega\in C^{\infty}(X)$ such that
\[
Ric(\omega)-\omega=\partial\bar{\partial}h_\omega,\quad \int_X e^{h_\omega}\omega^n=\int_X\omega^n
\]
Consider the following family of Monge-Amp\`{e}re equations.
\begin{align}
(\omega+\partial\bar{\partial}\phi_t)^n=e^{h_\omega-t\phi}\omega^n\tag*{$(*)_t$}\label{CMAt}
\end{align}
Let $R(X)=\sup \{t: \mbox{\ref{CMAt} is solvable }\}$. Then Sz\'{e}kelyhidi proved that
\begin{prop}[\cite{S}]
\[
R(X)=\sup\{t: \exists \mbox{ a K\"{a}hler metric }\; \omega\in
c_1(X) \mbox{ such that } Ric(\omega)> t\omega\}
\]
\end{prop}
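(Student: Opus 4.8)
\emph{Proof strategy.} Write $R'(X)$ for the quantity on the right. The plan is to prove $R(X)\le R'(X)$ and $R(X)\ge R'(X)$ separately; I use freely the standard facts $R(X)\le1$ and $R'(X)\le1$ (the latter from pairing $Ric(\omega)-t\omega$ with $\omega^{n-1}$, a cohomological identity in $c_1(X)$), so that only $t<1$ is relevant.

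\smallskip
\noindent\emph{The inequality $R(X)\le R'(X)$.} Given $t<R(X)$, let $\phi_t$ solve $(*)_t$. Applying $-\partial\bar\partial\log$ to both sides and using $Ric(\omega)-\omega=\partial\bar\partial h_\omega$, one finds after cancellation
\[
Ric(\omega+\partial\bar\partial\phi_t)=\omega+t\,\partial\bar\partial\phi_t=t\,(\omega+\partial\bar\partial\phi_t)+(1-t)\,\omega .
\]
As $t<1$ the form $(1-t)\omega$ is positive, so $\omega+\partial\bar\partial\phi_t\in c_1(X)$ is a K\"ahler metric with $Ric>t\,(\omega+\partial\bar\partial\phi_t)$; hence $t\le R'(X)$, and letting $t\uparrow R(X)$ gives the bound.

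\smallskip
\noindent\emph{The inequality $R(X)\ge R'(X)$.} Fix $t<R'(X)$ and a metric $\omega'\in c_1(X)$ with $Ric(\omega')>t\omega'$; by compactness $Ric(\omega')\ge t\omega'+\varepsilon\omega$ for some $\varepsilon>0$. It suffices to show $(*)_t$ is solvable, for then $t\le R(X)$ and, letting $t\uparrow R'(X)$, we are done. To solve $(*)_t$ I would run the continuity method along $s\in[0,t]\mapsto(*)_s$; $s=0$ is Yau's theorem. \emph{Openness:} at a solution $\phi_s$ with $s>0$ the linearization of $(*)_s$ is $v\mapsto\Delta_{\omega_{\phi_s}}v+sv$, and since (by the computation above, with $s$ in place of $t$) $Ric(\omega_{\phi_s})=s\omega_{\phi_s}+(1-s)\omega>s\omega_{\phi_s}$, the Bochner--Lichnerowicz inequality forces the lowest positive eigenvalue of $-\Delta_{\omega_{\phi_s}}$ to exceed $s$ strictly --- the strictness coming precisely from positivity of $(1-s)\omega$ at every point --- so the operator is invertible and the implicit function theorem applies (the same reasoning, with $\Delta_{\omega_{\phi_0}}+s$ invertible for small $s$, handles the base point). \emph{Closedness:} Yau's a priori estimates reduce matters to a uniform bound $\|\phi_s\|_{C^0}\le C$ for $s\le t$; granted this, the second-order estimate (applying $\Delta$ to $(*)_s$ and using the $C^0$-control of $h_\omega-s\phi_s$) gives a $C^2$ bound, then Evans--Krylov gives $C^{2,\alpha}$ and Schauder bootstrapping gives all higher norms, so one passes to limits and, with openness, concludes that $[0,t]$ is the solvable set; in particular $(*)_t$ holds.

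\smallskip
\noindent\emph{The main obstacle.} Everything thus rests on the uniform $C^0$ estimate for the $\phi_s$, $s\le t$, and this is where the hypothesis $Ric(\omega')>t\omega'$ is used and where the real work lies. Normalizing $\sup_X\phi_s=0$ (the true additive constant being recoverable from uniqueness of solutions of $(*)_s$), the Green's-function inequality for the fixed metric $\omega$ --- using $\Delta_\omega\phi_s\ge-n$ --- bounds the oscillation of $\phi_s$ by $\tfrac1V\int_X(-\phi_s)\,\omega^n+C$, so it remains to bound this average, equivalently $J_\omega(\phi_s)$. I expect the heart of the difficulty to be converting the qualitative hypothesis on $\omega'$ into this quantitative bound; the cleanest route I can see is to show that the Ding-type functional whose Euler--Lagrange equation is $(*)_s$ is coercive for $s\le t$ and stays bounded above along the continuity path, so that coercivity pins down $J_\omega(\phi_s)$ --- coercivity being inherited from $\omega'$, which, as $Ric(\omega')=t\omega'+\theta$ with $\theta=Ric(\omega')-t\omega'$ a K\"ahler form, is an exact solution of a twisted K\"ahler--Einstein equation cohomologous to the one defining $(*)_s$, while twisted functionals for cohomologous twists differ only by a controlled amount. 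One also has, for free, that every metric on the path satisfies $Ric(\omega_{\phi_s})\ge s\omega_{\phi_s}$, hence uniformly bounded diameter and Sobolev constant by Bonnet--Myers, which is what makes the Green/Moser reduction legitimate. Everything else is routine continuity-method bookkeeping.
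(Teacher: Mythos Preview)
The paper does not prove this proposition; it is quoted from Sz\'ekelyhidi \cite{S} and used as input. So there is no ``paper's own proof'' to compare against. That said, your outline is essentially the standard one and the easy direction $R(X)\le R'(X)$ is correct as written.

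For the hard direction, your plan is workable but slightly roundabout compared to Sz\'ekelyhidi's. His observation is that if $Ric(\omega')>t\omega'$ then $\alpha:=\tfrac{1}{1-t}(Ric(\omega')-t\omega')$ is a K\"ahler form in $c_1(X)$ and $\omega'$ is \emph{already} a solution of $(*)_t$ with reference metric $\alpha$; one then shows that $R(X)$ is independent of the reference metric (by interpolating between $\omega$ and $\alpha$ in the twist and checking the a priori estimates persist along that deformation). This sidesteps any appeal to properness of energy functionals. Your route --- inferring coercivity of the Ding/Mabuchi functional from the existence of $\omega'$ and then using coercivity to close the $C^0$ estimate --- also works, but the sentence ``twisted functionals for cohomologous twists differ only by a controlled amount'' hides the real content: the difference is not uniformly bounded on $\mathcal H_\omega$, only bounded by $C(1+J(\phi))$, and it is exactly this sublinear growth that lets coercivity transfer. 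If you pursue this line you should make that estimate explicit rather than assert it.

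One small correction: in your openness paragraph, the linearization of $(*)_s$ at $\phi_s$ is $\Delta_{\omega_{\phi_s}}+s$ acting on the \emph{left}, but the relevant invertibility is of $\Delta_{\omega_{\phi_s}}+s$ as an operator (equivalently, $-s$ is not an eigenvalue of $\Delta_{\omega_{\phi_s}}$), which is what your Bochner argument gives; just be sure the sign conventions match when you write it up.
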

In particular, $R(X)$ is independent of reference metric $\omega$. In \cite{Li}, we determined $R(X)$ for any toric Fano manifold.

A toric Fano
manifold $X_\triangle$ is determined by a reflexive lattice polytope
$\triangle$ (For details on toric manifolds, see \cite{Oda}). For
example, let $Bl_p\mathbb{P}^2$ denote the manifold obtained by blowing up one point on $\mathbb{P}^2$. Then $Bl_p\mathbb{P}^2$ is a toric Fano manifold and is determined by the following polytope.

\begin{figure}[h]
 \begin{center}\label{figure1}
\setlength{\unitlength}{1mm}
\begin{picture}(30,30)
\put(-18, 0){\line(1,0){40}} \put(0, -18){\line(0,1){40}}
\put(-12,0){\line(0,1){24}} \put(-12,0){\line(1,-1){12}}
\put(-12,24){\line(1,-1){36}} \put(0,-12){\line(1,0){24}}
\put(-6,-6){\line(1,1){7}} \put(0,0){\circle*{1}}
\put(1,1){\circle*{1}} \put(-6,-6){\circle*{1}} \put(-9,-9){$Q$}
\put(-3,1){$O$} \put(2,2){$P_c$}
\end{picture}
\end{center}
\end{figure}

\vspace*{10mm}

Any such polytope $\triangle$ contains the origin
$O\in\mathbb{R}^n$. We denote the barycenter of $\triangle$ by
$P_c$. If $P_c\neq O$, the ray
$P_c+\mathbb{R}_{\ge0}\cdot\overrightarrow{P_c O}$ intersects the
boundary $\partial\triangle$ at point $Q$.

\begin{thm}\cite{Li}\label{thm1}
If $P_c\neq O$,
\[
R(X_\triangle)=\frac{|\overline{OQ}|}{|\overline{P_cQ}|}
\]
Here $|\overline{OQ}|$, $|\overline{P_cQ}|$ are lengths of line
segments $\overline{OQ}$ and $\overline{P_cQ}$. If $P_c=O$, then
there is K\"{a}hler-Einstein metric on $X_\triangle$ and
$R(X_\triangle)=1$.
\end{thm}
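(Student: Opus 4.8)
By Proposition~1 the number $R(X_\triangle)=\sup\{t:\exists\,\omega'\in c_1(X_\triangle)\text{ with }Ric(\omega')>t\omega'\}$ is independent of the reference metric, and by uniqueness of the solution of \ref{CMAt} together with the torus-invariance of the data one may take every metric in sight to be invariant under the compact torus $T_c=(S^1)^n$. On the open orbit write $\omega'=\partial\bar{\partial}\psi$ with $\psi=\psi(x)$, $x\in\mathbb{R}^n$, a smooth strictly convex function whose gradient maps $\mathbb{R}^n$ diffeomorphically onto the interior of the reflexive moment polytope $\triangle$ of $-K_{X_\triangle}$. Since $Ric(\omega')=-\partial\bar{\partial}\log\det(\psi_{ij})$ on the orbit (the $\psi_{ij}$ being the real Hessian of $\psi$) and $h_{\omega'}=-\log\det(\psi_{ij})-\psi+\mathrm{const}$, one gets $Ric(\omega')-t\omega'=\partial\bar{\partial}G$ with
\[
G:=-\log\det(\psi_{ij})-t\psi=h_{\omega'}+(1-t)\psi+\mathrm{const};
\]
so the hypothesis $Ric(\omega')>t\omega'$ is exactly the statement that $G$ is a smooth strictly convex function of $x$.

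\emph{The upper bound.} Assume such a $\psi$ exists, with $t\in(0,1)$. Because $h_{\omega'}$ is bounded on the compact $X_\triangle$, adding it to $(1-t)\psi$ does not change the domain of the Legendre transform, so $\nabla G(\mathbb{R}^n)\subseteq\mathrm{dom}(G^*)=(1-t)\overline{\triangle}$. Push the real Monge--Amp\`ere measure $\det(\psi_{ij})\,dx$ forward by $\nabla\psi$; it becomes Lebesgue measure on $\triangle$, so with $g:=\nabla G\circ(\nabla\psi)^{-1}\colon\triangle\to(1-t)\overline{\triangle}$,
\[
\int_\triangle g(y)\,dy=\int_{\mathbb{R}^n}\nabla G\,\det(\psi_{ij})\,dx=-\int_{\mathbb{R}^n}\nabla(\det\psi_{ij})\,dx-t\int_{\mathbb{R}^n}\nabla\psi\,\det(\psi_{ij})\,dx.
\]
The first integral vanishes: by the standard (Guillemin) boundary behaviour of the symplectic potential $\psi^*$ on $\partial\triangle$ one has $\det(\psi_{ij})\to0$ exponentially at infinity, so the flux is $0$. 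The second equals $t\int_\triangle y\,dy=t\,\mathrm{vol}(\triangle)\,P_c$. Hence the average of $g$ over $\triangle$ is $-tP_c$; since $g$ takes values in the convex set $(1-t)\overline{\triangle}$, so does its average, whence $-tP_c\in(1-t)\overline{\triangle}$, i.e.
\[
-\tfrac{t}{1-t}P_c\in\triangle .
\]
As $t$ ranges over $[0,1)$ the point $-\tfrac{t}{1-t}P_c$ runs along the ray $P_c+\mathbb{R}_{\ge0}\overrightarrow{P_cO}$ from $P_c$ through $O$ to infinity, leaving $\triangle$ precisely at $Q$; so the displayed membership forces $t\le t^{*}$, where $-\tfrac{t^{*}}{1-t^{*}}P_c=Q$. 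A one-line computation with the collinear points $P_c,O,Q$ gives $t^{*}=|\overline{OQ}|/|\overline{P_cQ}|$, and therefore $R(X_\triangle)\le|\overline{OQ}|/|\overline{P_cQ}|$.

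\emph{The lower bound.} Fix $t<t^{*}$, so that $-\tfrac{t}{1-t}P_c\in\mathrm{int}\,\triangle$; it suffices to solve \ref{CMAt} for this $t$. Run the continuity method on $s\in[0,t]$: at $s=0$ this is the Calabi--Yau equation (Yau, Aubin); openness at a solution of the $s$-equation holds because $Ric(\omega_s)=s\omega_s+(1-s)\omega>s\omega_s$ forces $\lambda_1(-\Delta_{\omega_s})>s$, so the linearization $\Delta_{\omega_s}+s$ is invertible; and, by Yau's estimates, closedness reduces to an a priori $C^{0}$-bound on the normalized potentials. By torus symmetry this bound is equivalent to properness, modulo $T=(\mathbb{C}^{*})^{n}$, of the Ding-type functional whose Euler--Lagrange equation is \ref{CMAt}, and computing the asymptotic slope of this functional along the toric test configurations (translations by $\xi\in N_\mathbb{R}$, and more generally rational piecewise-linear convex directions on $\triangle$) shows that the required properness holds \emph{exactly} when $-\tfrac{t}{1-t}P_c\in\mathrm{int}\,\triangle$. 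This sharp $C^{0}$-estimate is the analytic heart of the argument and the step I expect to demand the most care. Carrying the continuity method up to $s=t$ for every $t<t^{*}$ gives $R(X_\triangle)\ge t^{*}$, and with the upper bound, $R(X_\triangle)=|\overline{OQ}|/|\overline{P_cQ}|$.

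Finally, if $P_c=O$ then $\triangle$ has barycenter at the origin, so by the Wang--Zhu criterion $X_\triangle$ admits a K\"ahler--Einstein metric; then \ref{CMAt} is solvable at $t=1$ and $R(X_\triangle)=1$. The upper and lower bounds are thus two faces of a single convex-geometric fact — that the twisted barycenter $-\tfrac{t}{1-t}P_c$ must sit inside $\triangle$ — which is also what governs the conic-singularity picture announced in the abstract.
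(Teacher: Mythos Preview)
Your upper bound is the paper's key identity \eqref{keyiden1} in light disguise: when $\omega'$ is an actual continuity solution one has $G=(1-t)\tilde u_0$ on the nose, so your $g=(1-t)\,D\tilde u_0\circ(Du)^{-1}$ and your averaging identity is exactly \eqref{keyiden1}; the observation that this average of values in $\triangle$ must lie in $\overline\triangle$ gives $-\tfrac{t}{1-t}P_c\in\overline\triangle$, just as you wrote.

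For the lower bound the paper takes a different and more concrete route. Rather than invoking properness of a Ding-type functional, it uses Wang--Zhu's estimates (Proposition~\ref{wzest}): a uniform bound on $m_t=\min w_t$ and a uniform linear growth $w_t\ge\kappa|x-x_t|-C$ about the minimizer $x_t$ of $w_t=tu+(1-t)\tilde u_0$. Solvability of \ref{CMAt} up to $t_0$ is then \emph{equivalent} to boundedness of $|x_t|$, so at $t\to R(X)$ one has $|x_{t_i}|\to\infty$ and hence $D\tilde u_0(x_{t_i})\to y_\infty\in\partial\triangle$. The linear-growth estimate concentrates the measure $e^{-w_t}dx$ near $x_t$, so the left side of \eqref{keyiden1} is close to $D\tilde u_0(x_t)$ and in the limit lands on $\partial\triangle$; this forces $-\tfrac{R(X)}{1-R(X)}P_c\in\partial\triangle$ (Proposition~\ref{main}), and both inequalities fall out at once. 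Your properness approach is viable in principle, but the step you yourself flag as delicate---that slope positivity along toric one-parameter directions yields the full $C^0$ estimate---is precisely where the Wang--Zhu concentration estimate does the real work, so in practice you would end up rebuilding much of their argument; the paper's route has the advantage of being self-contained real analysis on $\mathbb R^n$ rather than appealing to a properness$\Rightarrow$existence principle.
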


The next natural problem is what the limit metric looks like as
$t\rightarrow R(X)$. For the special example $X=Bl_p\mathbb{P}^2$,
which is also the projective compactification of total space of line
bundle $\mathcal{O}(-1)\rightarrow \mathbb{P}^2$.  Sz\'{e}kelyhidi
\cite{S} constructed a sequence of K\"{a}hler metric $\omega_t$,
with $Ric(\omega_t)\ge t\omega_t$ and $\omega_t$ which converge to a
metric with conic singularty along the divisor $D_\infty$ of conic
angle $2\pi\times 5/7$, where $D_\infty$ is divisor at infinity
added in projective compactification. Shi-Zhu \cite{ShZh} proved
that rotationally symmetric solutions to the continuity equations
\ref{CMAt} converge to a metric with conic singularity of conic
angle $2\pi\times 5/7$ in Gromov-Hausdorff sense, which seems to be
the first strict result on behavior of solutions to \ref{CMAt}. Note
that by the theory of Cheeger-Colding-Tian \cite{CCT}, the limit
metric in Gromov-Hausdorff sense should have complex codimension 1
conic type singularities if we only have the positive lower Ricci
bounds.

For the more general toric case, if we use a special toric metric,
which is just the Fubini-Study metric in the projective embedding
given by the vertices of the polytope, then, after transforming by
some biholomorphic automorphism, we prove there is a sequence of
K\"{a}hler metrics which solve the equation \ref{CMAt}, and converge
to a limit metric satisfying a singular complex Monge-Amp\`{e}re
equation (Also see equivalent real version in Theorem \ref{thm3}). This generalizes the result of \cite{ShZh} for
the special reference Fubini-Study metric. 

Precisely, let $\{p_\alpha; \alpha=1, \dots, N\}$ be all the vertex lattice points of $\triangle$ and $\{s_\alpha; \alpha=1, \dots, N\}$ be the corresponding holomorphic sections of $K_{X_\triangle}^{-1}$.  Then we take  reference metric to be
\[
\omega=\omega_{FS}=\partial\bar{\partial}\log\sum_{\alpha=1}^N|s_\alpha|^2
\]
which is the pull-back of the Fubini-Study metric of $\mathbb{CP}^{N-1}$ under Kodaira embedding by $\{s_\alpha\}$. 
Now using the same notation as that in Theorem \ref{thm1}, let $\mathcal{F}$ be the minimal face of $\triangle$ containing 
$Q$. Let $\{p_k^{\mathcal{F}}\}$ be the vertex lattice points of $\mathcal{F}$, then they correspond to a sub-linear system
$\mathfrak{L}_\mathcal{F}$ of $|-K^{-1}_{X_\triangle}|$. We let $Bs(\mathfrak{L}_\mathcal{F})$ denote the base locus of this sub-linear system. Also let $\sum_{\alpha}{}'$ denote the sum $\sum_{p_k^{\mathcal{F}}}$, then we have

\begin{thm}\label{thm2}
After biholomorphic transformation $\sigma_t: X_\triangle\rightarrow X_\triangle$, there is a subsequence $t_i\rightarrow R(X)$, such that $\sigma_{t_i}^*\omega_{t_i}$
converge to a K\"{a}hler current $\omega_\infty=\omega+\partial\bar{\partial}\psi_\infty$, with $\psi_\infty\in L^\infty(X_\triangle)\cap
C^{\infty}(X_\triangle\backslash Bs(\mathfrak{L}_{\mathcal{F}}))$,  which satisfies a complex Monge-Amp\`{e}re equation of the form
\begin{equation}\label{singMA1}
(\omega+\partial\bar{\partial}\psi_\infty)^n=e^{-R(X)\psi_\infty}\left(\sum_\alpha{}'b_\alpha\|s_\alpha\|^2\right)^{-(1-R(X))}\Omega
\end{equation}
Here $\Omega=e^{h_\omega}\omega^n$ is a smooth volume form. For each vertex lattice point $p_\alpha^{\mathcal{F}}$ of $\mathcal{F}$, $b_\alpha$ is a constant satisfying $0<b_\alpha<1$.  $\|\cdot\|=\|\cdot\|_{FS}$ is (up to multiplication of a constant) the Fubini-Study metric on 
$K_{X_{\triangle}}^{-1}$. 
In particular
\begin{equation}\label{singric}
Ric(\omega_{\psi_\infty})=R(X)\omega_{\psi_\infty}+(1-R(X))\partial\bar{\partial}\log(\sum_\alpha{}' b_\alpha |s_\alpha|^2)
\end{equation}
\end{thm}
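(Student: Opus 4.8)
The plan is to reduce the whole problem to convex analysis on the moment polytope $\triangle$, normalize the diverging family of solutions by the torus automorphisms, pass to the limit in the resulting real Monge--Amp\`ere equation, and finally obtain \eqref{singric} by differentiating \eqref{singMA1}.

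\emph{Step 1: toric reduction.} Since the solution of \ref{CMAt} is unique and $\omega=\omega_{FS}$ is $(\mathbb{C}^*)^n$-invariant, each $\phi_t$ is torus invariant, so in logarithmic coordinates $x=(\log|z_1|^2,\dots,\log|z_n|^2)$ the function $u_t:=u_0+\phi_t$, with $u_0(x)=\log\sum_\alpha e^{\langle p_\alpha,x\rangle}$, is smooth and convex on $\mathbb{R}^n$ with $\nabla u_t(\mathbb{R}^n)=\triangle$. Writing out \ref{CMAt} for $\omega=\omega_{FS}$ and using $Ric(\Omega)=\omega$, which forces $h_{\omega_{FS}}+\log\det D^2u_0+u_0$ to be an affine function $\ell_0(x)$, equation \ref{CMAt} turns into the real Monge--Amp\`ere equation
\begin{equation*}
\det D^2u_t(x)=e^{\ell_0(x)-t\,u_t(x)}\Big(\sum_\alpha e^{\langle p_\alpha,x\rangle}\Big)^{-(1-t)},
\end{equation*}
the constant term in $\ell_0$ being pinned down by $\int_Xe^{h_\omega}\omega^n=\int_X\omega^n$, equivalently by $\int_{\mathbb{R}^n}\det D^2u_t=\mathrm{vol}(\triangle)$. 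This is the real form alluded to in Theorem \ref{thm3}; compare \cite{Li}.

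\emph{Step 2: normalization by automorphisms and compactness.} The real torus $(\mathbb{R}_{>0})^n\subset\mathrm{Aut}(X_\triangle)$ acts on potentials by translation $u(x)\mapsto u(x+a)$, and it is this part of the automorphism group that we take for $\sigma_t$; choose $a_t$ so that $\tilde u_t(x):=u_t(x+a_t)$ attains its minimum, equal to $0$, at $x=0$ (possible since $0\in\mathrm{int}\,\triangle$). Two facts carry the whole argument, and establishing them is the \emph{main obstacle}: (i) $|a_t|\to\infty$, with $a_t/|a_t|$ converging to a direction that singles out the face $\mathcal{F}$ and with a rate for which the constants in Step 3 converge, so that in the limit only the monomials attached to the vertices $p_k^{\mathcal{F}}$ of $\mathcal{F}$ survive; and (ii) the $\tilde u_t$ stay in a compact family with $\psi_\infty:=\lim(\tilde u_t-u_0)\in L^\infty(X_\triangle)$. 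Fact (i) is where Theorem \ref{thm1} re-enters: the drift of the $u_t$ is precisely the barycenter obstruction $P_c\neq O$ (which is encoded in $\nabla\ell_0$), the ray $P_c+\mathbb{R}_{\geq0}\overrightarrow{P_c O}$ meets $\partial\triangle$ at $Q$, $\mathcal{F}$ is the minimal face through $Q$, and matching the growth rates in the translated equation reproduces $R(X)=|\overline{OQ}|/|\overline{P_cQ}|$ exactly as in \cite{Li}. For (ii), the $\tilde u_t$ are uniformly Lipschitz (their gradients all range in the fixed bounded set $\triangle$) and normalized at $0$, so by convexity and the Blaschke selection theorem a subsequence $\tilde u_{t_i}$ converges in $C^0_{loc}$ to a convex $u_\infty$; the serious point is $\psi_\infty\in L^\infty$ --- equivalently $\overline{\nabla u_\infty(\mathbb{R}^n)}=\triangle$, i.e. the moment polytope is not lost in the limit --- which requires the a priori estimates along the continuity path (a properness/oscillation bound for the relevant Ding- or Mabuchi-type functional restricted to the torus).

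\emph{Step 3: passage to the limit and the Ricci identity.} Translating the equation of Step 1 gives
\begin{equation*}
\det D^2\tilde u_{t_i}(x)=\tilde C_{t_i}\,e^{\ell_0(x)-t_i\tilde u_{t_i}(x)}\Big(\sum_\alpha \mu_\alpha(t_i)\,e^{\langle p_\alpha,x\rangle}\Big)^{-(1-t_i)},
\end{equation*}
with $\mu_\alpha(t)=e^{\langle p_\alpha,a_t\rangle}\big/\sum_{p_\beta\in\mathcal{F}}e^{\langle p_\beta,a_t\rangle}$ and $\tilde C_t=e^{\langle\nabla\ell_0,\,a_t\rangle}\big(\sum_{p_\beta\in\mathcal{F}}e^{\langle p_\beta,a_t\rangle}\big)^{-(1-t)}$. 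By Step 2(i), $\mu_\alpha(t_i)\to b_\alpha\in(0,1)$ for $p_\alpha\in\mathcal{F}$ (after the harmless common renormalization that puts the limits strictly below $1$), $\mu_\alpha(t_i)\to0$ for $p_\alpha\notin\mathcal{F}$, and $\tilde C_{t_i}$ converges to a positive finite constant that gets absorbed into the $b_\alpha$. On $X_\triangle\setminus Bs(\mathfrak{L}_{\mathcal{F}})$ the right-hand side has a smooth positive limit, so Caffarelli's interior $C^{2,\gamma}$-estimates promote $C^0_{loc}$-convergence to $C^\infty_{loc}$-convergence there; hence $\psi_\infty\in C^\infty(X_\triangle\setminus Bs(\mathfrak{L}_{\mathcal{F}}))$ and the limit equation holds pointwise off the base locus, while across $Bs(\mathfrak{L}_{\mathcal{F}})$ weak convergence of the Monge--Amp\`ere measures $(\omega+\partial\bar\partial\tilde u_{t_i})^n$ together with $\psi_\infty\in L^\infty$ gives \eqref{singMA1} in the Bedford--Taylor sense. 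Rewriting this in complex notation, via $\sum_\alpha e^{\langle p_\alpha,x\rangle}\leftrightarrow\sum_\alpha|s_\alpha|^2$ and $\|s_\alpha\|_{FS}^2=|s_\alpha|^2/\sum_\beta|s_\beta|^2$, yields exactly \eqref{singMA1} with $\Omega=e^{h_\omega}\omega^n$. Finally \eqref{singric} follows by applying $-\partial\bar\partial\log$ to \eqref{singMA1}: since $-\partial\bar\partial\log\Omega=Ric(\Omega)=\omega$, $-\partial\bar\partial(-R(X)\psi_\infty)=R(X)\partial\bar\partial\psi_\infty$, and $\partial\bar\partial\log\big(\sum_\alpha{}'b_\alpha\|s_\alpha\|^2\big)=\partial\bar\partial\log\big(\sum_\alpha{}'b_\alpha|s_\alpha|^2\big)-\omega$, these combine into $Ric(\omega_{\psi_\infty})=R(X)\omega_{\psi_\infty}+(1-R(X))\partial\bar\partial\log\big(\sum_\alpha{}'b_\alpha|s_\alpha|^2\big)$.
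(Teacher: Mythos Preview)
Your overall architecture---translate by torus automorphisms, pass to the limit in the real Monge--Amp\`ere equation, then reinterpret complexly---is exactly the paper's. But the proposal has a genuine gap at the point you yourself flag as ``the serious point'': the $L^\infty$ lower bound for $\psi_\infty$.

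Your suggested mechanism, a properness or oscillation bound for a Ding-- or Mabuchi--type functional, does not work here. The whole point is that $t\to R(X)$ is the threshold at which such coercivity fails; if the relevant functional were proper you could solve \ref{CMAt} beyond $R(X)$. The upper bound $\psi\le C$ is indeed cheap (your normalization plus $\nabla\tilde u_t\in\triangle$ gives $\tilde u_t(x)\le\bar v(x)$), but nothing in your outline produces a \emph{lower} bound for $\psi=\tilde u_t-u_0$ uniform in $t$. The paper obtains it by Tian's Harnack inequality: one rewrites the transformed equation as $(\omega+\partial\bar\partial\psi)^n=e^{-t\psi+F-B_t}\omega^n$, runs a secondary continuity family $(\omega+\partial\bar\partial\theta_s)^n=e^{-s\theta_s+F-B_t}\omega^n$ for $s\in[0,t]$, uses $Ric(\omega_{\theta_s})>s\,\omega_{\theta_s}$ and monotonicity of $I-J$ to solve all the way down to $s=0$, and then integrates to get $\sup_X(-\psi)\le n\sup_X\psi+C(n)t^{-1}$. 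This is the missing ingredient. Note also that the paper translates by the minimizer $x_t$ of $w_t=tu_t+(1-t)\tilde u_0$, not of $u_t$; this is what plugs directly into the Wang--Zhu estimate $|w_t(x_t)|\le C$ and makes the constants $\tilde C_{t_i}$ visibly bounded.

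Two smaller points. First, your identification of which $b_\alpha$ survive is incomplete: you assert $b_\alpha>0$ exactly for $p_\alpha\in\mathcal F$ but do not argue it. The paper's argument is that $D\tilde u_0(x_t)=\sum_\alpha b(p_\alpha,t)p_\alpha\to y_\infty\in\mathcal F$, which forces $b_\alpha=0$ for $p_\alpha\notin\mathcal F$; conversely, since the surviving $b_\alpha$ must affinely span $\mathcal F$, at least $\dim\mathcal F+1$ of them are positive, and then for any other vertex $p=\sum c_ip_i$ of $\mathcal F$ one has $b(p,t)=\prod b(p_i,t)^{c_i}\to\prod b_i^{c_i}>0$. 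Second, your appeal to Caffarelli's interior $C^{2,\gamma}$ estimates is a legitimate alternative route to the paper's complex Schwarz--lemma/Chern--Lu argument, but it is not free: you need uniform strict convexity of $\tilde u_{t_i}$ on compact sets (control of sections), which again comes from the Wang--Zhu growth estimate $w_t\ge\kappa|x-x_t|-C$ together with the $C^0$ bound you have not yet secured. The paper sidesteps this by working on the complex side, where $Ric(\omega_\psi)\ge t\,\omega_\psi$ feeds directly into $\Delta'\log\mathrm{tr}_{\omega_\psi}\omega\ge t-C\,\mathrm{tr}_{\omega_\psi}\omega$, yielding $\omega_\psi\ge C\omega$ and then the upper bound from the equation.
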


From this theorem we can expect the conic behavior at generic point of the singularities of the limit
metric, and we can read out the place of singularities and the
conic angles from the geometry of the polytope.
See Section \ref{conicint} for discussions. 
In particular, this can give a
toric explanation of the special case $Bl_p\mathbb{P}^2$ just
mentioned (See example \ref{example1}).

Note that, although we can prove the limit metric is smooth outside the singular locus, to actually prove 
it's a conic metric along codimension one strata of singular set, we need
to prove more delicate estimate that we wish to discuss in future.  There are also difficulties for studying the behavior of limit metric around higher codimensional strata (See Remark \ref{overlook} and Example \ref{example2}). 

\textbf{Acknowledgement}: The author thanks Professor Gang Tian for
helpful discussions and constant encouragement. In particular, he
notified the author the Harnack estimates from \cite{T4}. The author is grateful to Professor Jian Song 
for carefully reading the first version of this paper, and pointing out a wrong statement of the theorem \ref{thm2} and an inacurracy in the proof of Harnack inequality.  
See Remark
\ref{overlook} and Remark \ref{gap} on these issues. The author thanks Professor Sormani for discussions on the Gromov-Hausdorff convergence. The author
also thanks Yalong Shi for informing him the result of \cite{ShZh}
which motivates this paper. 

\section{Consequence of estimates of Wang-Zhu}
The proof of Theorem \ref{thm1} is based on the methods of Wang-Zhu \cite{WZ}.

For a reflexive
lattice polytope $\triangle$ in
$\mathbb{R}^n=\Lambda\otimes_\mathbb{Z}\mathbb{R}$, we have a Fano
toric manifold $(\mathbb{C}^*)^n\subset X_\triangle$ with a
$(\mathbb{C}^*)^n$ action. In the following, we will sometimes just write $X$ for $X_\triangle$ for simplicity.

Let $(S^1)^n\subset(\mathbb{C}^*)^n$ be the standard real maximal
torus. Let $\{z_i\}$ be the standard coordinates of the dense orbit
$(\mathbb{C}^*)^n$, and $x_i=\log|z_i|^2$. We have
\begin{lem}
Any $(S^1)^n$ invariant K\"{a}hler metric $\omega$ on $X$ has a
potential $u=u(x)$ on $(\mathbb{C}^*)^n$, i.e.
$\omega=\frac{\sqrt{-1}}{2\pi}\partial\bar{\partial}u$. $u$ is a
proper convex function on $\mathbb{R}^n$, and satisfies the momentum
map condition:
\[
Du(\mathbb{R}^n)=\triangle
\]
Also,
\begin{equation}\label{toricvol}
\frac{(\partial\bar{\partial}u)^n/n!}{\frac{dz_1}{z_1}\wedge\frac{d\bar{z}_1}{\bar{z}_1}\cdots\wedge\frac{dz_n}{z_n}\wedge\frac{d\bar{z}_n}{\bar{z}_n}}=\det\left(\frac{\partial^2u}{\partial x_i\partial x_j}\right)
\end{equation}
\end{lem}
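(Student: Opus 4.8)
The plan is to do everything on the open dense orbit $U=(\mathbb{C}^*)^n\subset X$ and then invoke the compactness of $X$ to control the behavior of the potential at infinity. For the existence of the potential I would first observe that $[\omega]|_U=0$: the torus-invariant prime divisors of $X$ all lie in $X\setminus U$ and their classes generate $H^2(X;\mathbb{R})$, while the Poincar\'e dual of such a divisor is represented by a closed form supported in a tubular neighbourhood disjoint from $U$, so the restriction map $H^2(X;\mathbb{R})\to H^2(U;\mathbb{R})$ vanishes. Writing $\omega|_U=d\beta$ and averaging $\beta$ over $(S^1)^n$, the $(2,0)$- and $(0,2)$-components drop out and one is left with an $(S^1)^n$-invariant, $d$-exact real $(1,1)$-form; working in the $(S^1)^n$-invariant de Rham complex on $U\simeq (S^1)^n$ shows that its ``topological part'' (the piece cohomologous to a constant-coefficient combination of the $d\theta_i\wedge d\theta_j$) must vanish, so it can be written as $\frac{\sqrt{-1}}{2\pi}\partial\dbar u$ with $u$ an $(S^1)^n$-invariant function, i.e. $u=u(x)$. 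A direct computation using $\partial x_i=\frac{dz_i}{z_i}$ gives $\partial\dbar u=\sum_{i,j}\frac{\partial^2 u}{\partial x_i\partial x_j}\frac{dz_i}{z_i}\wedge\frac{d\bar z_j}{\bar z_j}$, so that $\omega>0$ is equivalent to positive-definiteness of the real Hessian of $u$, that is, $u$ is strictly convex on $\mathbb{R}^n$.

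Next I would identify $Du$ with the moment map of the $(S^1)^n$-action for $\omega$ on $U$. Since $\omega$ extends smoothly across $X$, this moment map extends continuously to $X$ and its image is the moment polytope of $(X,c_1(X))$; comparing its normal fan with the fan of $X_\triangle$ and using $c_1(X)=-K_X$ identifies this polytope with $\triangle$. Because $D^2u>0$, the map $Du$ is a diffeomorphism of $\mathbb{R}^n$ onto the interior of $\triangle$, which is the content of $Du(\mathbb{R}^n)=\triangle$. Finally, as $\triangle$ is reflexive the origin $O$ is interior to the image of $Du$; for a convex function this is exactly the condition of coercivity, so $u$ is proper.

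It remains to verify \eqref{toricvol}, which is pure multilinear algebra: writing $e_i=\frac{dz_i}{z_i}$ and $\bar e_i=\frac{d\bar z_i}{\bar z_i}$, the expansion of $\partial\dbar u$ above together with the identity $\left(\sum_{i,j}a_{ij}\,e_i\wedge\bar e_j\right)^n=n!\,\det(a_{ij})\,e_1\wedge\bar e_1\wedge\cdots\wedge e_n\wedge\bar e_n$ gives $(\partial\dbar u)^n/n!=\det\left(\frac{\partial^2 u}{\partial x_i\partial x_j}\right)\,e_1\wedge\bar e_1\wedge\cdots\wedge e_n\wedge\bar e_n$, which is exactly \eqref{toricvol}.

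The only step carrying genuine content is the identification of the image of $Du$ with the reflexive polytope $\triangle$ itself, and not merely with some compact convex polytope: this is where one must translate the combinatorial data defining $X_\triangle$ and the anticanonical class into symplectic data, and pin down the normalization of $u$ so that $O$ is interior to $Du(\mathbb{R}^n)$. Everything else (the existence of the potential, its convexity and properness, and the volume formula) is formal once this identification is in place.
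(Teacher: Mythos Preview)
The paper does not actually prove this lemma; it is stated without proof as a standard fact from toric K\"ahler geometry (the surrounding references to \cite{Oda}, \cite{WZ}, \cite{D} are where the paper expects the reader to look). So there is nothing to compare against, and your sketch is supplying an argument the paper omits.

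Your outline is essentially correct and hits the right points: cohomological triviality of $\omega$ on the open orbit, averaging to obtain an $(S^1)^n$-invariant potential, the identification of $Du$ with the symplectic moment map whose image is the Delzant/moment polytope $\triangle$ (this is indeed the only step with real content, going back to Atiyah, Guillemin--Sternberg and Delzant), properness from $0\in\mathrm{int}(\triangle)$, and the multilinear algebra for \eqref{toricvol}. One place where your argument is a bit informal is the passage from $d$-exactness to $\partial\bar\partial$-exactness with an invariant potential on $(\mathbb{C}^*)^n$: note that $H^2((\mathbb{C}^*)^n;\mathbb{R})\neq 0$, so you are right that one must first check the class of $\omega|_U$ vanishes, but the subsequent step (``working in the $(S^1)^n$-invariant de Rham complex \ldots'') would benefit from being made explicit. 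A cleaner route is to pull back via $\exp:\mathbb{C}^n\to(\mathbb{C}^*)^n$, use the global $\partial\bar\partial$-lemma on $\mathbb{C}^n$ to write the pullback as $\partial\bar\partial\tilde u$, and then average $\tilde u$ over both the $(S^1)^n$-action and the deck transformations to obtain a potential depending only on $x$; this bypasses the need to argue about the restriction map on $H^2$. With that adjustment your proof is complete.
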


Let $\{p_\alpha;\; \alpha=1,\cdots, N\}$ be all the \textbf{vertex
lattice points} of $\triangle$. Each $p_\alpha$ corresponds to a
holomorphic section $s_\alpha\in H^0(X_\triangle,
K^{-1}_{X_\triangle})$. We can embed $X_\triangle$ into
$\mathbb{P}^{N}$ using $\{s_\alpha\}$. Define $\tilde{u}_0$ to be
the potential on $(\mathbb{C}^*)^n$ for the pull back of
Fubini-Study metric (i.e.
$\frac{\sqrt{-1}}{2\pi}\partial\bar{\partial}\tilde{u}_0=\omega_{FS}$):
\begin{equation}\label{u0}
\tilde{u}_0=\log\left(\sum_{\alpha=1}^N e^{<p_\alpha,x>}\right)+C
\end{equation}
$C$ is some constant determined by normalization condition:
\begin{equation}\label{normalize}
\int_{\mathbb{R}^n}e^{-\tilde{u}_0}dx=Vol(\triangle)=\frac{1}{n!}\int_{X_\triangle}\omega^n=\frac{c_1(X_\triangle)^n}{n!}
\end{equation}
By the normalization of $\tilde{u}_0$, it's easy to see that
\begin{equation}\label{torich}
\frac{e^{h_\omega}\omega^n}{\frac{dz_1}{z_1}\wedge\frac{d\bar{z}_1}{\bar{z}_1}\cdots\wedge\frac{dz_n}{z_n}\wedge\frac{d\bar{z}_n}{\bar{z}_n}}=e^{-\tilde{u}_0}
\end{equation}
\begin{rem}
 We only use vertex lattice points because, roughly speaking, later in Lemma \ref{limref}, vertex lattice points alone helps us to determine which sections become degenerate when doing biholomorphic transformation and taking limit.
 See remark \ref{vertex2}. We expect results similar to Theorem \ref{thm2} hold for
 general toric reference K\"{a}hler metric.
\end{rem}

So divide both sides of \ref{CMAt} by meromorphic volume form
$\frac{dz_1}{z_1}\wedge\frac{d\bar{z}_1}{\bar{z}_1}\cdots\wedge\frac{dz_n}{z_n}\wedge\frac{d\bar{z}_n}{\bar{z}_n}$,
We can rewrite the equations \ref{CMAt} as a family of real
Monge-Amp\`{e}re equations on $\mathbb{R}^n$:
\begin{align}
\det(u_{ij})=e^{-(1-t)\tilde{u}_0-tu}\tag*{$(**)_t$}\label{RMAt}
\end{align}
where $u$ is the potential for $\omega+\partial\bar{\partial}\phi$ on $(\mathbb{C}^*)^n$, and is related to $\phi$ in \ref{CMAt} by
\[
\phi=u-\tilde{u}_0
\]
For simplicity, let
\[
w_t(x)=tu(x)+(1-t)\tilde{u}_0
\]
Then $w_t$ is also a proper convex function on $\mathbb{R}^n$ satisfying $Dw_t(\mathbb{R}^n)=\triangle$. So it has a unique absolute minimum at point $x_t\in\mathbb{R}^n$. Let
\[
m_t=inf\{w_t(x):x\in\mathbb{R}^n\}=w_t(x_t)
\]

Then the main estimate of Wang-Zhu \cite{WZ} is that
\begin{prop}[\cite{WZ},See also \cite{D}]\label{wzest}
\begin{enumerate}
\item
there exists a constant $C$, independent of $t<R(X_\triangle)$, such
that
\[|m_t|<C\]
\item There exists $\kappa>0$ and a constant $C$, both independent of
$t<R(X_\triangle)$, such that
\begin{equation}\label{esti}
w_t\ge\kappa|x-x_t|-C
\end{equation}
\end{enumerate}
\end{prop}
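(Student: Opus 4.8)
The plan is to extract both statements from a single integral identity together with the convexity of $w_t$. Integrating \ref{RMAt} over $\mathbb{R}^n$ and using the momentum map condition $Du(\mathbb{R}^n)=\triangle$,
\[\int_{\mathbb{R}^n}e^{-w_t}\,dx=\int_{\mathbb{R}^n}\det(u_{ij})\,dx=Vol\big(Du(\mathbb{R}^n)\big)=Vol(\triangle)=:V,\]
a fixed finite positive number for every $t$. Since $\triangle$ is reflexive there are $0<\delta_0\le R_0<\infty$, depending only on $\triangle$, with $B_{\delta_0}(0)\subset\triangle\subset B_{R_0}(0)$. Because $w_t$ is convex with $Dw_t(\mathbb{R}^n)\subset B_{R_0}(0)$ and minimum $m_t$ at $x_t$, one has $w_t(x)\le m_t+R_0|x-x_t|$; inserting this into the identity gives $V\ge e^{-m_t}\int_{\mathbb{R}^n}e^{-R_0|y|}\,dy=c(n,R_0)\,e^{-m_t}$, hence the lower bound $m_t\ge -C$ with $C$ uniform, which is one half of (1). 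In the opposite direction $B_{\delta_0}(0)\subset\triangle$ forces, via the support function of $\triangle$, the pointwise asymptotics $\lim_{s\to\infty}\big(w_t(x_t+se)-m_t\big)/s=\sup_{p\in\triangle}\langle p,e\rangle\ge\delta_0$ for every unit vector $e$; thus $w_t$ is automatically proper with at-least-linear growth, but this is only asymptotic, and a convex increasing function may stay nearly constant over an arbitrarily large ball before it climbs. Ruling out such flattening, uniformly in $t<R(X_\triangle)$, is precisely the content of (2) and of the upper half of (1).

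I would obtain the upper bound $m_t\le C$ and the uniform slope $\kappa$ in (2) together, by a compactness--contradiction argument for convex functions, which is the substance of the Wang-Zhu estimate. Assume no uniform pair $(\kappa,C)$ works; along a sequence $t_i\nearrow t_*\le R(X_\triangle)$ on which the estimate degenerates, translate so that $x_{t_i}=0$ and rescale $w_{t_i}$ by a scale $\lambda_i\to\infty$ chosen to measure the size of the almost-flat region. Convexity gives local uniform subconvergence of the normalized functions to a convex $w_\infty$ on $\mathbb{R}^n$ whose gradient image has collapsed onto a proper face $\mathcal{F}$ of $\triangle$ (possibly the vertex $\{0\}$); equivalently $w_\infty$ is affine along some ray, hence essentially a function of fewer variables. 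The contradiction then comes from two sources: first, the mass identity $\int e^{-w_t}=V$, which once $m_t$ is under control prevents $e^{-w_t}$ from being spread too thinly; and, crucially, the strict inequality $t_i<R(X_\triangle)$ --- a genuinely degenerate blow-down limit would manufacture toric K\"{a}hler data incompatible with the characterization of $R(X_\triangle)$ (Proposition 1, or equivalently the barycentric formula of Theorem \ref{thm1}), forcing $R(X_\triangle)\le t_i$. The point is that $R(X_\triangle)$ enters only through this strict inequality, which is exactly why $\kappa$ and $C$ may be chosen independent of $t$ as long as $t$ stays below the threshold.

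With (2) established (with a constant $C$ controlled by the argument), the loop closes: sandwiching $w_t$ between $\max\{m_t,\ \kappa|x-x_t|-C\}$ from below and $m_t+R_0|x-x_t|$ from above, then splitting the mass identity over the ball $\{|x-x_t|\le (m_t+C)/\kappa\}$ and its complement, yields a quantitative upper bound on $m_t$ depending only on $V,\kappa,C,n$; alternatively the $m_t$-bound is folded into the same contradiction scheme. The main obstacle is the middle paragraph: selecting the correct normalization and scale $\lambda_i$, verifying that the blow-down limit is genuinely degenerate, and --- the truly delicate step --- reading off from that limit a violation of $t<R(X_\triangle)$ in the precise barycentric form of Theorem \ref{thm1}. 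The remainder is soft convex analysis together with the single integral identity above.
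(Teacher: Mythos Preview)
The paper does not give its own proof of this proposition: it is quoted from \cite{WZ} (see also \cite{D}) and used as a black box. So there is no in-paper argument to compare against; the relevant comparison is with the Wang--Zhu/Donaldson proof.

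Your lower bound $m_t\ge -C$ from the mass identity $\int_{\mathbb{R}^n}e^{-w_t}\,dx=Vol(\triangle)$ together with $w_t\le m_t+R_0|x-x_t|$ is correct and is exactly how that half is done. The problem is the remaining half. You propose to obtain the upper bound on $m_t$ and the uniform slope $\kappa$ by a blow-down contradiction that ultimately appeals to Proposition~1 or Theorem~\ref{thm1}. This is circular: in \cite{Li} the barycentric formula (Theorem~\ref{thm1}/Proposition~\ref{main}) is \emph{derived from} Proposition~\ref{wzest} (via Proposition~3 and the key identity \eqref{keyiden1}); you cannot feed the value of $R(X_\triangle)$ back in to prove the estimate that produced it. More to the point, the Wang--Zhu bounds have nothing to do with the threshold $R(X_\triangle)$: the constants $C,\kappa$ depend only on $\triangle$ and $n$, and the estimates hold for every $t$ for which a solution of \ref{RMAt} exists. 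The phrase ``independent of $t<R(X_\triangle)$'' is just shorthand for that.

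What is actually used in \cite{WZ,D} is a direct argument that exploits the equation, not only the three soft facts (convexity, $Dw_t(\mathbb{R}^n)=\triangle$, fixed mass). Your own one-dimensional flattening example shows those three alone do not suffice. The genuine input is that on the sublevel set $A=\{w_t\le m_t+1\}$ one has $\det(u_{ij})=e^{-w_t}\ge e^{-(m_t+1)}$; a John-ellipsoid normalization of $A$ together with the Alexandrov comparison principle for the real Monge--Amp\`ere operator then bounds the eccentricity of $A$ and yields both $m_t\le C$ and the uniform linear growth \eqref{esti} simultaneously. If you want to rewrite the proof, that is the mechanism to reproduce; the compactness--contradiction scheme you sketch, with $R(X_\triangle)$ as the source of the contradiction, does not close.
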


\begin{prop}[\cite{WZ}]
the uniform bound of $|x_t|$ for any $0\le t\le t_0$, is equivalent
to that we can solve \ref{RMAt}, or equivalently solve \ref{CMAt},
for $t$ up to $t_0$. More precisely, (by the discussion in
introduction,) this condition is equivalent to the uniform
$C^0$-estimates for the solution $\phi_t$ in \ref{CMAt} for $t\in
[0,t_0]$.
\end{prop}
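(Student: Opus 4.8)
\noindent The proposition packages three conditions on the family of solutions, for $t$ ranging over the interval $[0,t_0)$ on which they exist: (i) $\sup_t|x_t|<\infty$; (ii) $\sup_t\|\phi_t\|_{C^0(X)}<\infty$; (iii) \ref{CMAt} (equivalently \ref{RMAt}) is solvable for all $t\in[0,t_0]$. The plan is to prove (ii)$\Leftrightarrow$(iii) by the classical continuity method and (i)$\Leftrightarrow$(ii) by passing to convex analysis on $\mathbb{R}^n$ through the identity $\phi_t=u_t-\tilde u_0=\tfrac1t(w_t-\tilde u_0)$. For (ii)$\Leftrightarrow$(iii): the solvable set in $[0,1)$ is open because at a solution the linearization $\psi\mapsto\Delta_{\omega_{\phi_t}}\psi+t\psi$ is invertible -- along the path one has $Ric(\omega_{\phi_t})=(1-t)\omega+t\omega_{\phi_t}>t\omega_{\phi_t}$, so $\lambda_1(-\Delta_{\omega_{\phi_t}})>t$ by Bochner--Lichnerowicz -- and $t=0$ is solvable by Yau; hence solutions exist on an interval and vary continuously with $t$, so (iii) gives (ii) by continuity on the compact parameter set. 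Conversely a uniform $C^0$ bound makes $e^{h_\omega-t\phi_t}$ bounded between positive constants; a Yau-type $C^2$ estimate then applies (the $\Delta_\omega\phi_t$ contributions to $\Delta_\omega\log(e^{h_\omega-t\phi_t})$ being absorbed, using $\mathrm{tr}_\omega\omega_{\phi_t}\ge c>0$ by AM--GM), and Evans--Krylov together with a Schauder bootstrap yield uniform $C^{k,\alpha}$ bounds, so $\phi_t$ subconverges as $t\to t_0$ to a solution at $t_0$.

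For (ii)$\Rightarrow$(i): on $(\mathbb{C}^*)^n$ one has $\phi_t=\tfrac1t(w_t-\tilde u_0)$ with $\sup_X\phi_t=\sup_{\mathbb{R}^n}(u_t-\tilde u_0)$ and likewise for the infimum, and, writing $H_\triangle(v)=\max_{p\in\triangle}\langle p,v\rangle$ for the support function, the fixed reference satisfies $H_\triangle+C\le\tilde u_0\le H_\triangle+\log N+C$, with $H_\triangle(x)\ge\varepsilon_0|x|$ because $O\in\mathrm{int}\,\triangle$. A bound $\|\phi_t\|_{C^0}\le K$ gives $\|w_t-\tilde u_0\|_{C^0(\mathbb{R}^n)}=t\|\phi_t\|_{C^0}\le K$, whence $m_t\le w_t(x_0^*)\le\tilde u_0(x_0^*)+K$ for $x_0^*$ the fixed minimizer of $\tilde u_0$, while $m_t=w_t(x_t)\ge\tilde u_0(x_t)-K\ge\varepsilon_0|x_t|+C-K$; combining bounds $|x_t|$. (One may assume $t$ bounded away from $0$, the small-$t$ range being trivial since solvability there is unobstructed.)

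The reverse implication (i)$\Rightarrow$(ii) is the substance, and is where Proposition \ref{wzest} is used. One half is immediate: from $|x_t|\le A$, $|m_t|\le C$ (Proposition \ref{wzest}(1)), and the convexity bound $w_t(x)\le m_t+H_\triangle(x-x_t)\le H_\triangle(x)+C(A)$ (subadditivity of $H_\triangle$, together with $H_\triangle(-x_t)=O(|x_t|)=O(A)$) one gets $u_t\le H_\triangle+C(A)$, hence $\sup_X\phi_t\le C(A)$ -- equivalently, after Legendre transform, a uniform lower bound for the symplectic potential $u_t^*$ on $\triangle$, using that $\sup_{\mathbb{R}^n}|u_t-\tilde u_0|=\sup_\triangle|u_t^*-\tilde u_0^*|$ and that $\tilde u_0^*$ is a fixed bounded function (essentially the entropy $\sum_\alpha\lambda_\alpha\log\lambda_\alpha$). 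The hard half is the opposite estimate $\inf_X\phi_t\ge-C(A)$, equivalently a uniform upper bound on $u_t^*$ near the faces of $\triangle$, equivalently $u_t(x)\ge H_\triangle(x)-C(A)$: the crude estimates -- convexity together with the linear lower bound $w_t\ge\kappa|x-x_t|-C$ of Proposition \ref{wzest}(2) -- only give $u_t(x)\ge\kappa|x-x_t|-C$, leaving a gap of order $|x|$ since $\kappa$ is in general strictly below the support-function growth rate. Closing this gap -- showing that control of $|x_t|$ forces the full uniform $C^0$ bound on $\phi_t$ -- is precisely Wang--Zhu's a priori analysis \cite{WZ} (see also \cite{D}), which exploits the Monge--Amp\`ere normalization $\int_{\mathbb{R}^n}e^{-w_t}\,dx=Vol(\triangle)$ and the behavior of the solution near the boundary of the polytope. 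I expect this a priori lower bound on $\phi_t$ from control of $x_t$ to be the main obstacle; given it, all the higher-order estimates of the first paragraph apply, yielding (ii), hence (iii).
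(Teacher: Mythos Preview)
The paper does not prove this proposition; it is stated and attributed to \cite{WZ} without argument. Your proposal therefore goes well beyond what the paper supplies. Your treatment of (ii)$\Leftrightarrow$(iii) via the continuity method and of (ii)$\Rightarrow$(i) via the properness of $\tilde u_0$ is correct, as is your derivation of the upper bound $\sup_X\phi_t\le C(A)$ from $|x_t|\le A$ together with Proposition~\ref{wzest}(1). You correctly isolate the one nontrivial step --- the lower bound $\inf_X\phi_t\ge -C(A)$ once $|x_t|$ is bounded --- and defer it to \cite{WZ}, which is exactly what the paper does for the entire statement.

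One remark worth making: the paper in fact contains the tool that would close your gap without appealing further to \cite{WZ}. The Harnack inequality \eqref{Harnack}, proved later in the paper for the transformed potential $\psi$ via Tian's argument from \cite{T4}, applies verbatim to $\phi_t$ itself (the untransformed equation \ref{CMAt} has $Ric(\omega_{\phi_t})=t\omega_{\phi_t}+(1-t)\omega\ge t\omega_{\phi_t}$ and a smooth right-hand side, so the auxiliary continuity path \ref{news} with $B_t\equiv0$ works with no complications). This gives $\sup_X(-\phi_t)\le n\sup_X\phi_t+C(n)t^{-1}$, and combined with your upper bound on $\sup_X\phi_t$ it yields the missing lower bound for $t$ bounded away from $0$. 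So your outline could be made self-contained using only ingredients already present in the paper.
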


By the above proposition, we have
\begin{lem}
If $R(X_\triangle)<1$, then there exists a subsequence $\{x_{t_i}\}$
of $\{x_t\}$, such that
\[\lim_{t_i\rightarrow R(X_\triangle)}|x_{t_i}|=+\infty\]
\end{lem}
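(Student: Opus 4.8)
The plan is to argue by contradiction, using the standard openness of the continuity method. Recall that $R(X_\triangle)=\sup\{t:\ref{CMAt}\text{ is solvable}\}$, that $0<R(X_\triangle)<1$ under the hypothesis (by Theorem \ref{thm1}, or directly), and that the set of $t\in[0,1]$ for which \ref{CMAt} is solvable is open: the linearization of \ref{CMAt} at a solution $\phi_t$ is, up to sign, $\Delta_{\omega_{\phi_t}}-t$, and since a solution satisfies $Ric(\omega_{\phi_t})=(1-t)\omega+t\omega_{\phi_t}>t\omega_{\phi_t}$, this operator is invertible for $t<1$, so the implicit function theorem applies. I would then suppose, for contradiction, that $\limsup_{t\to R(X_\triangle)^-}|x_t|<+\infty$.

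First I would upgrade this to a genuinely uniform bound on the whole range $[0,R(X_\triangle))$. On any compact subinterval $[0,t_0]\subset[0,R(X_\triangle))$ the equation \ref{RMAt} is solvable, so $x_t$ is finite and depends continuously on $t$ there; combined with the assumed bound on a left-neighborhood of $R(X_\triangle)$, this produces a constant $C$, independent of $t$, with $|x_t|\le C$ for all $t\in[0,R(X_\triangle))$. By the Wang-Zhu criterion recalled above, such a uniform bound on $|x_t|$ is equivalent to a uniform $C^0$-estimate for the solutions $\phi_t$ of \ref{CMAt} on the same range. Feeding this $C^0$-bound into Yau's higher-order estimates yields uniform $C^k$-bounds for every $k$, so a subsequence $\phi_{t_j}$ converges smoothly as $t_j\to R(X_\triangle)$ to a smooth solution of \ref{CMAt} at $t=R(X_\triangle)$. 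Hence \ref{CMAt} is solvable at $t=R(X_\triangle)$, and since $R(X_\triangle)<1$, openness then makes it solvable for some $t'\in(R(X_\triangle),1]$, contradicting the definition of $R(X_\triangle)$ as the supremum. Therefore $\limsup_{t\to R(X_\triangle)^-}|x_t|=+\infty$, and choosing $t_i\in(R(X_\triangle)-\tfrac1i,R(X_\triangle))$ with $|x_{t_i}|>i$ gives the asserted subsequence.

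I do not expect a real obstacle here: the argument is mostly bookkeeping on top of the propositions already stated. The one step carrying content is the closedness/endpoint step --- passing from the uniform bound to an actual solution at $t=R(X_\triangle)$ --- which rests on the a priori estimates being uniform up to $R(X_\triangle)$ (exactly what the contradiction hypothesis supplies) together with the now-standard higher-order estimates. So the lemma follows quickly from Wang-Zhu's work combined with the openness of the continuity method.
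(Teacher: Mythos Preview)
Your argument is correct and is essentially the same approach as the paper's: the paper states the lemma immediately after the Wang--Zhu proposition with only the phrase ``By the above proposition, we have'', so it treats the lemma as an immediate consequence without writing out the openness/closedness details you supply. Your write-up simply makes explicit the standard continuity-method reasoning (uniform $|x_t|$ bound $\Rightarrow$ uniform $C^0$ $\Rightarrow$ higher-order estimates $\Rightarrow$ solvability at and past $R(X_\triangle)$) that the paper leaves implicit.
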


By the properness of $\tilde{u}_0$ and compactness of $\triangle$, we ge imediately that

\begin{lem}\label{observe1}
If $R(X_\triangle)<1$, then there exists a subsequence of
$\{x_{t_i}\}$ which we still denote by $\{x_{t_i}\}$, and
$y_\infty\in
\partial\triangle$, such that
\begin{equation}\label{cvyinf}
\lim_{t_i\rightarrow R(X_\triangle)}D\tilde{u}_0(x_{t_i})=y_\infty
\end{equation}
\end{lem}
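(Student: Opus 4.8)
The plan is to combine compactness of $\triangle$ with the explicit formula \ref{u0} for $\tilde{u}_0$ and the preceding lemma, which gives a subsequence along which $|x_{t_i}|\to+\infty$. First, by the momentum map condition $D\tilde{u}_0(\mathbb{R}^n)=\triangle$, every $D\tilde{u}_0(x_{t_i})$ lies in the compact polytope $\triangle$, so Bolzano--Weierstrass yields a further subsequence, which I will not relabel, with $D\tilde{u}_0(x_{t_i})\to y_\infty\in\triangle$. The only real content is then to show $y_\infty\in\partial\triangle$ rather than in the interior.

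To see this I would use the explicit gradient. Setting $\lambda_\alpha(x)=e^{\langle p_\alpha,x\rangle}\big/\sum_\beta e^{\langle p_\beta,x\rangle}$, differentiation of \ref{u0} gives
\[
D\tilde{u}_0(x)=\sum_{\alpha=1}^N\lambda_\alpha(x)\,p_\alpha,\qquad
\lambda_\alpha(x)>0,\qquad \sum_{\alpha=1}^N\lambda_\alpha(x)=1,
\]
so $D\tilde{u}_0(x)$ is always a strictly positive convex combination of the vertices $p_\alpha$. Since $|x_{t_i}|\to+\infty$, after passing to a further subsequence I may assume $x_{t_i}/|x_{t_i}|\to v$ with $|v|=1$. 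Because $\triangle$ is full dimensional (it is reflexive), the linear functional $\langle\cdot,v\rangle$ is non-constant on $\triangle$, so the face $\mathcal{F}_v\subset\triangle$ on which it attains its maximum is a proper face, hence $\mathcal{F}_v\subset\partial\triangle$. Writing $\langle p_\alpha,x_{t_i}\rangle=|x_{t_i}|\langle p_\alpha,v\rangle+o(|x_{t_i}|)$, the weights $\lambda_\alpha(x_{t_i})$ tend to $0$ for every vertex $p_\alpha\notin\mathcal{F}_v$, while the remaining weights, supported on vertices of $\mathcal{F}_v$, sum to $1$ in the limit. Therefore $y_\infty$ is a limit of convex combinations of vertices of $\mathcal{F}_v$, hence $y_\infty\in\mathcal{F}_v\subset\partial\triangle$.

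A slightly softer alternative is to invoke the standard fact that a proper strictly convex smooth function with $D\tilde{u}_0(\mathbb{R}^n)=\triangle$ restricts to a diffeomorphism $\mathbb{R}^n\to\mathrm{int}(\triangle)$ (the Legendre transform of the symplectic potential is again proper); then if $y_\infty$ were interior, continuity of $(D\tilde{u}_0)^{-1}$ at $y_\infty$ would force $x_{t_i}=(D\tilde{u}_0)^{-1}\big(D\tilde{u}_0(x_{t_i})\big)$ to converge to the finite point $(D\tilde{u}_0)^{-1}(y_\infty)$, contradicting $|x_{t_i}|\to+\infty$. I do not expect a serious obstacle here; the one point needing care is precisely the assertion that escape of $x_{t_i}$ to infinity forces $D\tilde{u}_0(x_{t_i})$ toward $\partial\triangle$, and this is exactly what either the direct weight estimate or the diffeomorphism/properness statement supplies, both being routine once the explicit formula \ref{u0} is used.
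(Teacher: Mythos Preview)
Your proposal is correct and matches the paper's approach, which consists solely of the one-line remark preceding the lemma invoking properness of $\tilde{u}_0$ and compactness of $\triangle$. Your ``softer alternative'' is precisely that argument spelled out, while your explicit weight computation is also valid and in fact anticipates the proof the paper gives later for Proposition~\ref{limref}.
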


To determine $R(X_\triangle)$ we use the key identity:

\begin{equation}\label{keyiden1}
\frac{1}{Vol(\triangle)}\int_{\mathbb{R}^n}D\tilde{u}_0e^{-w}dx=-\frac{t}{1-t}P_c
\end{equation}

\begin{rem}
This identity is a toric form of a general formula for solutions of equations \ref{CMAt}:
\[
-\frac{1}{V}\int_X div_\Omega(v)\omega_t^n=\frac{t}{1-t}F_{c_1(X)}(v)
\]
Here $\Omega=e^{h_\omega}\omega^n$. $v$ is any holomorphic vector field, and $div_\Omega(v)=\frac{\mathcal{L}_v\Omega}{\Omega}$ is the divergence
of $v$ with respect to $\Omega$.
\[
F_{c_1(X)}(v)=\frac{1}{V}\int_X v(h_\omega)\omega^n
\]
is the Futaki invariant in class $c_1(X)$ \cite{Fu}.
\end{rem}

By properness of $w_t$, the left handside of \eqref{keyiden1} is roughly $D\tilde{u}_0(x_t)$. As long as this is bounded away from
the boundary of the polytope, we can control the point $x_t$. So as $t$ goes to $R(X_\triangle)$, since $x_t$ goes to infinity
in $\mathbb{R}^n$, the left handside goes to a point on $\partial\triangle$, which is roughly $y_\infty$. To state a precise statement, assume
the reflexive polytope $\triangle$ is defined by
inequalities:
\begin{equation}\label{deftri}
\lambda_r(y)\ge-1,\;r=1,\cdots,K
\end{equation}
$\lambda_r(y)=\langle v_r,y\rangle$ are fixed linear functions. We
also identify the minimal face of $\triangle$ where $y_\infty$ lies:
\begin{eqnarray}\label{yinface}
&&\lambda_r(y_{\infty})=-1,\; r=1,\cdots,K_0\\
&&\lambda_r(y_{\infty})>-1,\; r=K_0+1,\cdots, K\nonumber
\end{eqnarray}

Then Theorem \ref{thm1} follows from
\begin{prop}\cite{Li}\label{main}
If $P_c\neq O$,
\[-\frac{R(X_\triangle)}{1-R(X_\triangle)}P_c\in\partial\triangle\]
Precisely,
\begin{equation}\label{result}
\lambda_r\left(-\frac{R(X_\triangle)}{1-R(X_\triangle)}P_c\right)\ge
-1
\end{equation}
Equality holds if and only if $r=1,\cdots,K_0$. So
$-\frac{R(X_\triangle)}{1-R(X_\triangle)}P_c$ and $y_\infty$ lie on
the same faces \eqref{yinface}.
\end{prop}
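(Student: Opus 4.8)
\section*{Proof proposal for Proposition \ref{main}}

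The plan is to extract everything from the key identity \eqref{keyiden1} by reading its left-hand side as the barycenter of a family of probability measures on $\triangle$. Set $\mu_t=\frac1{Vol(\triangle)}e^{-w_t}\,dx$; this is a probability measure on $\mathbb R^n$, since by \ref{RMAt} and the change of variables $y=Du(x)$ one has $\int_{\mathbb R^n}e^{-w_t}\,dx=\int_{\mathbb R^n}\det(u_{ij})\,dx=Vol(\triangle)$. Because $D\tilde u_0(\mathbb R^n)\subset\triangle$, the push-forward $\nu_t:=(D\tilde u_0)_*\mu_t$ is a probability measure supported in $\triangle$, and \eqref{keyiden1} says exactly that its barycenter is $-\tfrac t{1-t}P_c$. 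Since $\lambda_r\ge-1$ on $\triangle$ for each $r$, integrating this inequality against $\nu_t$ gives $\lambda_r\!\big(-\tfrac t{1-t}P_c\big)\ge-1$; letting $t=t_i\to R(X)$ along the subsequence of Lemma \ref{observe1} (we may assume $R(X)<1$, for $R(X)=1$ would let us send $t\to1$ in \eqref{keyiden1}, whose left side remains in $\triangle$ while the right side blows up unless $P_c=O$) yields \eqref{result}, and hence also $-\tfrac{R(X)}{1-R(X)}P_c\in\triangle$.

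For the equality statement I would use the pointwise identity
\[
\lambda_r\big(D\tilde u_0(x)\big)+1=\sum_{\alpha}\big(\lambda_r(p_\alpha)+1\big)\,\frac{e^{\langle p_\alpha,x\rangle}}{\sum_\beta e^{\langle p_\beta,x\rangle}},
\]
in which only the vertices $p_\alpha$ off the facet $\{\lambda_r=-1\}$ contribute. Two uniform estimates are the engine: (i) by Proposition \ref{wzest}, $e^{-w_t(x)}\le e^{C}e^{-\kappa|x-x_t|}$, so $\mu_t$ has exponentially small mass away from $x_t$; and (ii) $w_t$ is convex with $Dw_t(\mathbb R^n)=\triangle$ bounded, so together with $|m_t|\le C$ one gets $w_t(x)\le C'(1+|x-x_t|)$ and hence $\mu_t(B(x_t,\rho))\ge c(\rho)>0$ uniformly in $t$. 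For $r>K_0$ (so $\lambda_r(y_\infty)+1>0$), continuity of $D\tilde u_0$ gives $\lambda_r(D\tilde u_0(x))+1\ge\tfrac12(\lambda_r(y_\infty)+1)$ on $B(x_{t_i},\rho)$ for $\rho$ small and $i$ large; integrating against $\mu_{t_i}$ and using (ii) yields $\lambda_r(-\tfrac{t_i}{1-t_i}P_c)+1\ge\tfrac12(\lambda_r(y_\infty)+1)c(\rho)>0$, hence $\lambda_r(-\tfrac{R(X)}{1-R(X)}P_c)>-1$.

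For $r\le K_0$ I would first show that every vertex $p_\alpha$ not lying on the minimal face $\mathcal F$ of $y_\infty$ has weight $\tfrac{e^{\langle p_\alpha,x_{t_i}\rangle}}{\sum_\beta e^{\langle p_\beta,x_{t_i}\rangle}}\to0$: passing to a subsequence along which all weights converge, $y_\infty$ is the corresponding convex combination of the $p_\beta$, and a positive limiting weight on some $p_\alpha\notin\mathcal F$ would give $\lambda_r(y_\infty)>-1$ for some $r\le K_0$, contradicting \eqref{yinface}; equivalently $\max_\beta\langle p_\beta,x_{t_i}\rangle-\langle p_\alpha,x_{t_i}\rangle\to+\infty$ for such $\alpha$. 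Then I split $\int\tfrac{e^{\langle p_\alpha,x\rangle}}{\sum_\beta e^{\langle p_\beta,x\rangle}}e^{-w_{t_i}}\,dx$ over $\{|x-x_{t_i}|\le\rho\}$, where the weight is at most $e^{\rho\,\mathrm{diam}(\triangle)}$ times $e^{-(\max_\beta\langle p_\beta,x_{t_i}\rangle-\langle p_\alpha,x_{t_i}\rangle)}\to0$, and $\{|x-x_{t_i}|>\rho\}$, where (i) bounds it by $e^{C}\!\int_{|\xi|>\rho}e^{-\kappa|\xi|}\,d\xi$, which is small for $\rho$ large; this forces the integral to $0$. Hence $\lambda_r(-\tfrac{t_i}{1-t_i}P_c)+1=\int(\lambda_r(D\tilde u_0)+1)\,d\nu_{t_i}\to0$, i.e. $\lambda_r(-\tfrac{R(X)}{1-R(X)}P_c)=-1$. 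Since $K_0\ge1$, this also places $-\tfrac{R(X)}{1-R(X)}P_c$ on $\partial\triangle$, and on exactly the faces \eqref{yinface}.

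The main obstacle is precisely the weight-degeneration claim for $r\le K_0$, equivalently the exponential gap $\max_\beta\langle p_\beta,x_{t_i}\rangle-\langle p_\alpha,x_{t_i}\rangle\to+\infty$ for $p_\alpha\notin\mathcal F$: it asserts that $x_{t_i}$ runs to infinity in a direction whose visible face lies inside $\mathcal F$, which is forced by $D\tilde u_0(x_{t_i})\to y_\infty$ but only through the convexity argument above — a naive Lipschitz estimate on $D\tilde u_0$ does not suffice, because $\nu_{t_i}$ does not converge to $\delta_{y_\infty}$ but merely concentrates its mass onto the face $\mathcal F$. This is also why the conclusion is ``$-\tfrac{R(X)}{1-R(X)}P_c$ and $y_\infty$ lie on the same faces'' rather than the stronger statement that they coincide.
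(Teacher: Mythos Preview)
The paper does not actually prove Proposition~\ref{main}; it is quoted from the author's earlier paper \cite{Li}, and only the one-paragraph heuristic preceding the statement (``the left-hand side of \eqref{keyiden1} is roughly $D\tilde u_0(x_t)$\ldots'') is given here. So there is no detailed argument in this paper to compare against.

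That said, your proposal is a correct and complete fleshing-out of exactly that heuristic. The interpretation of the left side of \eqref{keyiden1} as the barycenter of $(D\tilde u_0)_*\mu_t$ immediately gives \eqref{result}; the strict inequality for $r>K_0$ via the uniform lower mass bound $\mu_t(B(x_t,\rho))\ge c(\rho)$ (which follows from $|Dw_t|\le\mathrm{diam}(\triangle)$ and $|m_t|\le C$) is the right mechanism; and your near/far splitting for $r\le K_0$, driven by the weight degeneration $b(p_\alpha,t_i)\to0$ for $p_\alpha\notin\mathcal F$, is exactly how one makes ``roughly $D\tilde u_0(x_t)$'' rigorous. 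Note that the weight-degeneration step you isolate as the ``main obstacle'' is in fact reproved later in this very paper as the first half of Proposition~\ref{limref}, with the same convex-combination argument you give --- but there the author is allowed to \emph{use} Proposition~\ref{main} as input, whereas you correctly observe that the implication $b_\alpha>0\Rightarrow p_\alpha\in\mathcal F$ needs only Lemma~\ref{observe1} and the definition of $\mathcal F$, not Proposition~\ref{main} itself, so there is no circularity in your route.

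One small wording point: your parenthetical ruling out $R(X)=1$ is fine in spirit but phrased a bit loosely. Under the standing hypothesis $P_c\neq O$, if $R(X)=1$ then \ref{CMAt} is solvable for every $t<1$, and sending $t\uparrow1$ in \eqref{keyiden1} makes the right side leave the bounded set $\triangle$ while the left side stays inside --- contradiction. This is what you mean; it does not rely on Theorem~\ref{thm1}.
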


\section{Discussion of limit conic type metric}\label{limsection}
\subsection{Equation for the limit metric}
We first fix the reference metric to be the Fubini-Study metric.
\[
\omega=\partial\bar{\partial}\tilde{u}_0=\partial\bar{\partial}\log(\sum_\alpha|s_\alpha|^2)
\]
We want to see what's the limit of $\omega_t$ as $t\rightarrow R(X)$, where
\[
\omega_t=\omega+\partial\bar{\partial}\phi
\]
is solution of continuity equation \ref{CMAt}.
Equivalently, under the toric coordinate,
\[
\omega_t=\frac{\partial^2 u}{\partial \log z_i\partial\log z_j}d\log z_i\wedge d\log z_j=-\sqrt{-1} u_{ij}dx_id\theta_j
\]
 where $u=u_t$ is the solution of real Monge-Amp\`{e}re equation \ref{RMAt}.

Let $\sigma=\sigma_t$ be the holomorphic transformation given by
\[
\sigma_t(x)=x+x_t
\]
Assume $x_t=(x_t^1,\cdots, x_t^n)$, then under complex coordinate, we have
\[
\sigma_t(\{z_i\})=\{e^{x_t^i/2}z_i\}
\]
By the analysis of previous section, we do the following transformation.
\begin{equation}\label{transform}
U(x)=\sigma_t^*u(x)-u(x_t)=u(x+x_t)-u(x_t),\quad \tilde{U}_t(x)=\sigma_t^*\tilde{u}_0(x)-\tilde{u}_0(x_t)=\tilde{u}_0(x+x_t)-\tilde{u}_0(x_t)
\end{equation}
Note that $w_t(x)=tu+(1-t)\tilde{u}_0$. Then $U=U_t(x)$ satisfy the following Monge-Amp\`{e}re equation
\begin{align}
\det(U_{ij})=e^{-t
U-(1-t)\tilde{U}-w(x_t)}\tag*{$(**)'_t$}\label{TRMAt}
\end{align}

By Proposition \ref{main}, we know that $Q=-\frac{R(X_\triangle)}{1-R(X_\triangle)}P_c$ lies on the boundary of $\triangle$. Let $\mathcal{F}$ be the minimal face of $\triangle$ which contains $Q$. Now the observation is
\begin{prop}\label{limref}
There is a subsequence $t_i\rightarrow R(X)$, $\tilde{U}_{t_i}$
converge locally uniformly to a convex function of the form:
\begin{equation}\label{Uinf}
\tilde{U}_\infty=\log\left(\sum_{p_\alpha\in\mathcal{F}} b_\alpha e^{\langle p_\alpha, x\rangle}\right)
\end{equation}
where $0<b_\alpha\le 1$ are some constants. For simplicity, we will use $\sum_\alpha{}'=\sum_{p_\alpha\in\mathcal{F}}$ to denote the sume over all the \textbf{vertex lattice points} contained in $\mathcal{F}$.
\end{prop}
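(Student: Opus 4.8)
The plan is to analyze the explicit formula $\tilde U_t(x) = \tilde u_0(x+x_t) - \tilde u_0(x_t)$ directly, using $\tilde u_0 = \log\big(\sum_\alpha e^{\langle p_\alpha, x\rangle}\big) + C$. Substituting, one gets
\[
\tilde U_t(x) = \log\left(\frac{\sum_\alpha e^{\langle p_\alpha, x + x_t\rangle}}{\sum_\beta e^{\langle p_\beta, x_t\rangle}}\right) = \log\left(\sum_\alpha b_\alpha(t)\, e^{\langle p_\alpha, x\rangle}\right),
\]
where the weights $b_\alpha(t) = e^{\langle p_\alpha, x_t\rangle}/\sum_\beta e^{\langle p_\beta, x_t\rangle}$ are positive and sum to $1$. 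So the whole statement reduces to understanding the limiting behavior of the vector $(b_\alpha(t))_\alpha$ in the simplex as $t_i \to R(X)$.

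First I would pass to the subsequence of Lemma \ref{observe1}, along which $D\tilde u_0(x_{t_i}) \to y_\infty \in \partial\triangle$, and along which (Proposition \ref{main}) $y_\infty$ lies in the relative interior of a face $\mathcal F$ of $\triangle$ — indeed one can take $\mathcal F$ to be the minimal face containing $Q = -\tfrac{R(X)}{1-R(X)}P_c$. The key elementary fact is that $D\tilde u_0(x) = \sum_\alpha \frac{e^{\langle p_\alpha, x\rangle}}{\sum_\beta e^{\langle p_\beta, x\rangle}}\, p_\alpha$ is exactly the barycenter of the vertices $\{p_\alpha\}$ weighted by $\{b_\alpha(t)\}$. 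Since $\triangle = \mathrm{conv}\{p_\alpha\}$, a weighted average of the vertices lying on a face $\mathcal F$ forces all vertices with nonnegligible weight to lie on $\mathcal F$: concretely, apply a supporting linear functional $\lambda_r$ with $\lambda_r \equiv -1$ on $\mathcal F$ and $\lambda_r > -1$ on every vertex not in $\mathcal F$; evaluating on $D\tilde u_0(x_{t_i}) \to y_\infty$ with $\lambda_r(y_\infty) = -1$ and taking the limit forces $b_\alpha(t_i) \to 0$ for every vertex $p_\alpha \notin \mathcal F$ (a positive convex combination equals the extreme value only if the off-face weights vanish in the limit). After passing to a further subsequence so that every $b_\alpha(t_i)$ converges, write $b_\alpha = \lim b_\alpha(t_i)$; then $b_\alpha = 0$ unless $p_\alpha \in \mathcal F$, $b_\alpha \ge 0$ with $b_\alpha \le 1$, and $\sum_\alpha' b_\alpha = 1$.

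It remains to check two things: that each surviving $b_\alpha$ is strictly positive, and that $\tilde U_{t_i} \to \tilde U_\infty := \log\big(\sum_\alpha' b_\alpha e^{\langle p_\alpha, x\rangle}\big)$ locally uniformly. For strict positivity I would use the estimate of Wang-Zhu (Proposition \ref{wzest}): the bound $|m_t| < C$ together with $w_t \ge \kappa|x - x_t| - C$ controls how fast $w_t$ grows, and hence bounds the relative sizes $\langle p_\alpha - p_\beta, x_t\rangle$ for vertices on $\mathcal F$ — no single on-face weight can decay to zero, since that would make $D\tilde u_0(x_{t_i})$ converge to a proper subface of $\mathcal F$, contradicting minimality of $\mathcal F$ (one may have to shrink $\mathcal F$ a priori and then observe this is exactly the minimal face; this bookkeeping is where I expect to spend the most care). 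Local uniform convergence is then immediate: on any compact set $K$, $\sum_\alpha b_\alpha(t_i) e^{\langle p_\alpha, x\rangle}$ converges uniformly to $\sum_\alpha' b_\alpha e^{\langle p_\alpha, x\rangle}$ because there are finitely many terms, each coefficient converges, and $\log$ is uniformly continuous on the range (bounded away from $0$ and $\infty$ on $K$). Convexity of $\tilde U_\infty$ is automatic as a log-sum-exp. The main obstacle is the strict positivity of the $b_\alpha$ and the matching of the a priori limit face with the minimal face $\mathcal F$ from Proposition \ref{main}; everything else is the soft structure of weighted barycenters of a polytope.
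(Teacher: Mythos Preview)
Your setup and the argument that $b_\alpha=0$ for vertices $p_\alpha\notin\mathcal F$ match the paper's proof exactly: compute $\tilde U_t(x)=\log\bigl(\sum_\alpha b(p_\alpha,t)e^{\langle p_\alpha,x\rangle}\bigr)$, pass to a subsequence so each $b(p_\alpha,t)\to b_\alpha$, and use $D\tilde u_0(x_t)=\sum_\alpha b(p_\alpha,t)p_\alpha\to y_\infty\in\mathcal F$ together with a supporting functional to kill the off-face weights. The local uniform convergence step is also fine.

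The gap is in the strict positivity argument. Your claimed mechanism --- that if some on-face $b_\alpha$ vanished then $y_\infty$ would fall onto a proper subface of $\mathcal F$ --- is false when $\mathcal F$ is not a simplex. For instance, if $\mathcal F$ has four vertices and one of the limit weights is zero, the remaining three can still place $y_\infty$ in the relative interior of $\mathcal F$. So ``minimality of $\mathcal F$'' gives no contradiction, and the Wang--Zhu estimate on $w_t$ does not obviously translate into control of $\langle p_\alpha-p_\beta,x_t\rangle$ either; that route is not the one the paper takes, and you would need a separate argument to make it work.

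The paper's fix is a short but essential multiplicative identity you are missing. From $y_\infty=\sum_{p_\alpha\in\mathcal F} b_\alpha p_\alpha$ lying in the relative interior of the $k$-dimensional face $\mathcal F$, one first observes that at least $k+1$ of the limiting weights $b_1,\dots,b_{k+1}$ must be positive and the corresponding vertices $p_1,\dots,p_{k+1}$ affinely span $\mathcal F$. Then any other vertex $p\in\mathcal F$ is an affine combination $p=\sum_{i=1}^{k+1} c_i p_i$ with $\sum c_i=1$, and one checks directly from the definition that
\[
b(p,t)=\frac{e^{\langle p,x_t\rangle}}{\sum_\beta e^{\langle p_\beta,x_t\rangle}}
=\prod_{i=1}^{k+1}\left(\frac{e^{\langle p_i,x_t\rangle}}{\sum_\beta e^{\langle p_\beta,x_t\rangle}}\right)^{c_i}
=\prod_{i=1}^{k+1} b(p_i,t)^{c_i}\ \longrightarrow\ \prod_{i=1}^{k+1} b_i^{c_i}>0.
\]
This propagates positivity from the affinely spanning set to every vertex of $\mathcal F$, and is precisely why the paper insists on using only \emph{vertex} lattice points (so that each $p\in\mathcal F$ actually lies in the affine span of $p_1,\dots,p_{k+1}$; cf.\ Remark~\ref{vertex2}). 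Once you insert this step, the rest of your proposal goes through.
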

\begin{proof}
By \eqref{u0} and \eqref{transform}, we have
\begin{equation}\label{tUx}
\tilde{U}(x)=\log(\sum_\alpha e^{\langle p_\alpha, x+x_t\rangle})-\log(\sum_\alpha e^{\langle p_\alpha, x_t})=\log(\sum b(p_\alpha, t) e^{\langle p_\alpha, x\rangle})
\end{equation}
where
\[
b(p_\alpha, t)=\frac{e^{\langle p_\alpha, x_t\rangle}}{\sum_\beta e^{\langle p_\beta, x_t\rangle}}
\]
Since $0<b(p_\alpha, t)< 1$, we can assume there is a subsequence $t_i\rightarrow R(X)$, such that for any vertex lattice point $p_\alpha$,
\begin{equation}\label{limbt}
\lim_{t\rightarrow R(X)} b(p_\alpha, t)=b_\alpha
\end{equation}
We need to prove $b_\alpha\neq 0$ if and only if $p_\alpha\in\mathcal{F}$.

To prove this, we first note that
\begin{equation}\label{du0bt}
D\tilde{u}_0(x_t)=\frac{\sum_\alpha p_\alpha e^{\langle p_\alpha, x_t\rangle}}{\sum_\beta e^{\langle p_\beta, x_t\rangle}}=\sum_\alpha b(p_\alpha, t)p_\alpha
\end{equation}

By Lemma \ref{observe1}, $D\tilde{u}_0(x_t)\rightarrow y_\infty$.   So by letting $t\rightarrow R(X)$ in \eqref{du0bt} and using \eqref{limbt}, we get
\[
y_\infty=\sum_\alpha b_\alpha p_\alpha
\]

By Proposition \ref{main}, $y_\infty\in \partial\triangle$ lies on the same faces as $Q$ does, i.e. $\mathcal{F}$ is also the minimal face containing $y_\infty$, so we must have $b_\alpha=0$ if $p_\alpha\notin\mathcal{F}$. We only need to show if $p_\alpha \in\mathcal{F}$, then $b_\alpha\neq 0$.

If dim $\mathcal{F}$=k, then there exists k+1 vertex lattice points $\{p_1, \cdots, p_{k+1}\}$ of $\mathcal{F}$, such that the corresponding coefficient $b_i\neq 0$, $i=1,\cdots, k+1$, i.e. $\lim_{t\rightarrow R(X)}b(p_i, t)=b_i>0$.
\begin{rem}\label{vertex2}
 Here is why we need to assume $p_\alpha$ are all vertex lattice points.
\end{rem}

Let $p$ be any vertex point of $\mathcal{F}$, then
\[
p=\sum_{i=1}^{k+1} c_i p_i, \quad \mbox{where}\quad \sum_{i=1}^{k+1} c_i=1
\]
Then
\[
b(p,t)=\frac{e^{\langle\sum_{i=1}^{k+1}c_i p_i,
x_t\rangle}}{\sum_\beta e^{\langle p_\beta,
x_t\rangle}}=\prod_{i=1}^{k+1}\left(\frac{e^{\langle p_i,
x_t\rangle}}{\sum_\beta e^{\langle p_\beta, x_t\rangle}}
\right)^{c_i}=\prod_{i=1}^{k+1}b(p_i, t)^{c_i}\stackrel{t\rightarrow
R(X)}{-\!\!-\!\!\!\longrightarrow} \prod_{i=1}^{k+1}b_i^{c_i}>0
\]
\end{proof}
We can state a real version of Theorem \ref{thm2}
\begin{thm}\label{thm3}
There is a subsequence $t_i\rightarrow R(X)$, $U_{t_i}(x)$ converge
to a smooth entire solution of the following equation on
$\mathbb{R}^n$
\begin{align}
\det(U_{ij})=e^{-R(X)U(x)-(1-R(X))\tilde{U}_\infty(x)-c}\tag*{$(**)'_\infty$}\label{limiteq}
\end{align}
$c=\lim_{t_i\rightarrow R(X)}w(x_{t_i})$ is some constant.
\end{thm}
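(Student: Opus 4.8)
The plan is to establish Theorem \ref{thm3} by passing to the limit in the transformed equation \ref{TRMAt} along a suitable subsequence, combining the convergence $\tilde U_{t_i}\to\tilde U_\infty$ from Proposition \ref{limref} with a priori estimates coming from the Wang--Zhu bounds in Proposition \ref{wzest}. First I would record the normalization $U_t(0)=0$ and $DU_t(0)=Du(x_t)$; by Lemma \ref{observe1} (applied with the convex functions $w_t$, whose gradients sweep out $\triangle$) the points $x_t$ are chosen so that $Dw_t(x_t)=0$, hence $t\,Du(x_t)+(1-t)D\tilde u_0(x_t)=0$, and since $D\tilde u_0(x_t)\to y_\infty\in\partial\triangle$ this pins $DU_t(0)$ uniformly. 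The key compactness input is estimate \eqref{esti}: since $w_t(x+x_t)-w_t(x_t)\ge \kappa|x|-C$, and $w_t(x+x_t)-w_t(x_t)=tU_t(x)+(1-t)\tilde U_t(x)$, while $\tilde U_t(x)$ is controlled from above and below on compact sets by the explicit formula \eqref{tUx} and the convergence \eqref{limbt}, we get a uniform lower bound $U_t(x)\ge \kappa'|x|-C'$ valid for $t$ near $R(X)$. Together with convexity and the normalization, this yields uniform local $C^0$ and $C^1$ bounds on the $U_t$, hence local uniform convergence (after passing to a further subsequence) of $U_{t_i}$ to a convex function $U_\infty$ on $\mathbb R^n$.

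Next I would upgrade this to smooth convergence. The equation \ref{TRMAt} reads $\det((U_t)_{ij})=e^{-tU_t-(1-t)\tilde U_t-w_t(x_t)}$; the right-hand side is, on each fixed compact set, bounded above and below away from zero (using the $C^0$ bounds just obtained, the convergence of $\tilde U_{t_i}$, and $|w_t(x_t)|=|m_t|\le C$ from part (1) of Proposition \ref{wzest}). So the complex/real Monge--Amp\`ere operator is uniformly elliptic with bounded coefficients on compacta; by the Caffarelli--Evans--Krylov interior estimates for the real Monge--Amp\`ere equation (equivalently, the standard second-order and Schauder estimates for \ref{CMAt} in the $(S^1)^n$-invariant setting), one gets interior $C^k$ bounds for every $k$ on every compact subset, uniform in $t_i$. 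Passing to the limit then shows $U_\infty\in C^\infty(\mathbb R^n)$ and that it solves $\det((U_\infty)_{ij})=e^{-R(X)U_\infty-(1-R(X))\tilde U_\infty-c}$, where $c=\lim w_{t_i}(x_{t_i})$ exists after a final subsequence extraction since $|w_t(x_t)|\le C$. Here one uses that the exponents $t_i\to R(X)$ and $\tilde U_{t_i}\to\tilde U_\infty$ locally uniformly, so the right-hand sides converge locally uniformly.

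I expect the main obstacle to be the uniform interior estimates for $U_{t_i}$, specifically controlling $\det((U_t)_{ij})$ from below away from zero on large compact sets as $t_i\to R(X)$: a lower bound for the Monge--Amp\`ere measure requires an upper bound on $U_t$ on a slightly larger set, which in turn needs the gradient image $DU_t(\mathbb R^n)=\triangle$ together with the linear growth lower bound to localize the geometry. The delicate point is that the ``Hessian determinant bounded below'' is not automatic from convexity plus the gradient-image condition; one must feed in the linear lower bound \eqref{esti} to rule out the degeneration where $U_t$ becomes asymptotically linear too soon, which would make $\det((U_t)_{ij})$ collapse. Once the two-sided bound on the right-hand side of \ref{TRMAt} is secured on each compactum uniformly in $i$, the rest is the by-now-standard Monge--Amp\`ere bootstrap and a diagonal subsequence argument. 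A secondary technical point, harmless but worth stating, is that all estimates are $(S^1)^n$-invariant, so the real Monge--Amp\`ere estimates on $\mathbb R^n$ and the complex Monge--Amp\`ere estimates on $(\mathbb C^*)^n$ are interchangeable via \eqref{toricvol}.
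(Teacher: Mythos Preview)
Your approach is genuinely different from the paper's. The paper does \emph{not} prove Theorem~\ref{thm3} by real Monge--Amp\`ere theory on $\mathbb{R}^n$; it explicitly remarks that this ``might be done'' and then instead reformulates \ref{TRMAt} as the complex Monge--Amp\`ere equation \ref{TCMAt} on the compact manifold $X_\triangle$ and proves Theorem~\ref{thm2} (which is equivalent). The paper's $C^0$-estimate is obtained globally on $X$: the upper bound for $\psi=U-\tilde u_0$ is the easy inequality $U\le\bar v$, while the lower bound is a Harnack inequality $\sup_X(-\psi)\le n\sup_X\psi+C(n)t^{-1}$ proved via Tian's auxiliary continuity family \ref{news}, Bando--Mabuchi Green-function estimates, and monotonicity of $I-J$. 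The partial $C^2$-estimate is then obtained from the Chern--Lu/Schwarz inequality using $Ric(\omega_\psi)\ge t\omega_\psi$, followed by Evans--Krylov and Schauder away from the base locus. Your route avoids the Harnack machinery entirely: the local $C^0$ and $C^1$ bounds for $U_t$ on $\mathbb{R}^n$ follow immediately from $U_t(0)=0$ and $DU_t(\mathbb{R}^n)=\triangle$, which is considerably cheaper. The price is that you only get Theorem~\ref{thm3} and not the global $L^\infty$ bound for $\psi_\infty$ on $X_\triangle$ that Theorem~\ref{thm2} asserts.

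There is, however, a genuine soft spot in your $C^2$ step. You correctly obtain two-sided bounds on $\det(D^2U_t)$ on each compact set: indeed $|U_t(x)|\le C|x|$, $\tilde U_t\to\tilde U_\infty$ locally uniformly, and $|m_t|\le C$, so the right-hand side of \ref{TRMAt} is pinched between positive constants on compacta --- this part is \emph{not} the obstacle, contrary to what you write. The actual difficulty is that two-sided determinant bounds plus $C^1$ control do \emph{not} by themselves yield uniform interior $C^2$ estimates (Pogorelov's singular examples with $\det D^2u=1$). What you need is either (i) uniform strict convexity, so that the sections $\{U_t<\ell\}$ are uniformly bounded and Pogorelov's interior second-derivative estimate applies on them, or (ii) strict convexity of the limit $U_\infty$ so that Caffarelli's $C^{2,\alpha}$ regularity applies to the Alexandrov solution. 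This is precisely where the Wang--Zhu linear growth \eqref{esti} for $W_t=tU_t+(1-t)\tilde U_t$ enters: it gives uniformly bounded sections of $W_t$, and since $\tilde U_t$ is uniformly Lipschitz with $\tilde U_t(0)=0$, one can transfer this (with some care, since $\tilde U_\infty$ is only linear, not strictly convex, transverse to $\mathcal F$) to control sections of $U_t$. So your overall strategy is sound, but the sentence invoking ``Caffarelli--Evans--Krylov'' needs to be unpacked into a genuine Pogorelov-section argument fed by \eqref{esti}; your diagnosis that the issue is the determinant collapsing is not quite the right mechanism.
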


\subsection{Change to Complex Monge-Amp\`{e}re equation}
The proof of Theorem \ref{thm3} might be done by theory of real
Monge-Amp\`{e}re equation. But here, we will change our view and
rewrite \ref{TRMAt} as a family of complex Monge-Amp\`{e}re
equations. This will alow us to apply some standard estimates in the
theory of complex Monge-Amp\`{e}re equations.

We rewrite the formula for $\tilde{U}(x)$ \eqref{tUx}  as
\begin{equation}\label{transdif}
e^{\tilde{U}}=\frac{\sum_\alpha b(p_\alpha,t) e^{\langle p_\alpha, x\rangle}}{\sum_\beta e^{\langle p_\beta, x\rangle}}\sum_\beta e^{\langle p_\beta,x\rangle}=\frac{\sum_\alpha b(p_\alpha,t)|s_\alpha|^2}{\sum_\beta |s_\beta|^2}e^{\tilde{u}_0}
=(\sum_\alpha b(p_\alpha,t) \|s_\alpha\|^2) e^{\tilde{u}_0}
\end{equation}
$s_\alpha$ is the holomorphic section of $K_X^{-1}$ corresponding to lattice point $p_\alpha$. Here and in the following $\|\cdot\|=\|\cdot\|_{FS}$ is the Fubini-Study metric on $K_X^{-1}$.

\ref{TRMAt} can then be rewritten as
\[
\det(U_{ij})=e^{-t\psi}e^{-\tilde{u}_0}\left(\sum_\alpha b(p_\alpha,t)\|s_\alpha\|^2\right)^{-(1-t)}e^{-w(x_t)}
\]
By \eqref{toricvol} and \eqref{torich},
\ref{TRMAt} can finally be written as the complex Monge-Amp\`{e}re
equation
\begin{align}
 (\omega+\partial\bar{\partial}\psi)^n= e^{-t\psi}\left(\sum_\alpha b(p_\alpha,t)\|s_\alpha\|^2\right)^{-(1-t)}e^{h_\omega-w(x_t)}\omega^n\tag*{$(***)_t$}\label{TCMAt}
\end{align}
where
\begin{equation}\label{psit}
\psi=\psi_t=U-\tilde{u}_0
\end{equation}

Similarly for $\tilde{U}_\infty$ \eqref{Uinf}, we write
\[
e^{\tilde{U}_\infty}=\frac{\sum_\alpha{}'b_\alpha e^{\langle p_\alpha, x\rangle}}{\sum_\beta e^{\langle p_\beta, x\rangle}}\sum_\beta e^{\langle p_\beta,x\rangle}
=(\sum_\alpha{}'b_\alpha \|s_\alpha\|^2) e^{\tilde{u}_0}
\]
And the limit equation \ref{limiteq} becomes:
\begin{align}
 (\omega+\partial\bar{\partial}\psi)^n=e^{-R(X)\psi} \left(\sum_\alpha{}'b_\alpha\|s_\alpha\|^2\right)^{-(1-R(X))}e^{h_\omega-c}\omega^n\tag*{$(***)_\infty$}\label{cxlim1}
\end{align}


So we reformulate Theorem \ref{thm3} as the main Theorem \ref{thm2} in the introduction.


\subsection{Discussion on the conic behavior of limit metric}\label{conicint}
For any lattice point $p_\alpha\in\triangle$, let $D_{p_\alpha}=\{s_\alpha=0\}$ be the zero divisor of the corresponding holomorphic section $s_\alpha$.  By toric geometry , we have \[
D_{p_\alpha}=\{s_\alpha=0\}=\sum_{i=1}^{K} (\langle p_\alpha,v_i\rangle+1) D_i
\]
Here $v_i$ is the primitive inward normal vector to the i-th codimension one face, and $D_i$ is the toric divisor corresponding to this face.

Recall that  $\mathcal{F}$ is the minimal face containing $Q$. Let
$\{p_k^{\mathcal{F}}\}$ be all the vertex lattice points of
$\mathcal{F}$. They correspond to a sublinear system $\mathfrak{L}_\mathcal{F}$ of $|K_X^{-1}|$. The base locus of $\mathfrak{L}_\mathcal{F}$ is given by the schematic intersection
\[
Bs(\mathfrak{L}_\mathcal{F})=\bigcap_{k}D_{p_k^{\mathcal{F}}}
\]
The fixed components in $Bs(\mathfrak{L}_\mathcal{F})$ are
\begin{equation}\label{bslcs}
D^{\mathcal{F}}=\sum_{i=1}^r a_iD_i
\end{equation}
where
\[
\mathbb{N}\ni a_i=1+\min_{k}\langle p_k^{\mathcal{F}}, v_i\rangle >0, i=1, \dots, r
\]
For $i=1,\dots, K$, we always have $a_i=1+\min_{k}\langle p_k^{\mathcal{F}}, v_i\rangle\ge 0$. In \eqref{bslcs}, the coefficients $a_i$ are those with $a_i\ne 0$. 

%



Pick any generic point $p$ on $D^{\mathcal{F}}$.  $p$ lies on only one component of $D^{\mathcal{F}}$.   Without loss of generality, assume $p\in D_1$, and in a neighborhood $\mathcal{U}_p$ of $p$,  choose local coordinate $\{z_i\}$ such that $D_1$ is defined by $z_1=0$, then  the singular Monge-Amp\`{e}re equation \eqref{singMA1} locally becomes:

\begin{equation}\label{locsing1}
(\omega+\partial\bar{\partial}\psi)^n=|z_1|^{-2a_1(1-R(X))}f
\end{equation}
with $f$ a nonvanishing smooth function in $\mathcal{U}_p$. 

where $\{z_1=0\}$ is the current of integration along divisor $\{z_1=0\}$.

Note that we have the following singular conic metric in $\mathcal{U}_p$ 
\[
\eta=\frac{dz_1\wedge d\bar{z}_1}{|z_1|^{2\alpha}}+\sum_{i=2}^n dz_i\wedge d\bar{z}_i
\]
$\eta$ has conic singularity along $\{z_1=0\}$ with conic angle $2\pi(1-\alpha)$, and satisfies
\[
\eta^n=\frac{dz_1\wedge d\bar{z}_1\wedge\cdots\wedge dz_n\wedge d\bar{z}_n}{|z_1|^{2\alpha}} 
\]

Comparing this with \eqref{locsing1}, we expect that the limit K\"{a}hler metric around $p$ has conic singularity along $D_1$ with conic angle equal to  $2\pi(1-(1-R(X))a_1)$ and the same hold for generic points on $D_i$.  

\begin{rem}\label{overlook}
At present, it seems difficult to speculate the behavior of limit metric around higher codimensional strata of $D^{\mathcal{F}}$. See the discussion in example \ref{example2}.  We hope to return to this issue in future. In the first version of this paper, the author overlooked the higher codimensional strata of the singularity locus and gave a wrong statement of the main theorem \ref{thm2}. Professor Jian Song pointed out this to him. 
\end{rem}
\section{Proof of Theorem \ref{thm2}}

We are now in the general setting of complex Monge-Amp\`{e}re equations. \ref{cxlim1} is a complex Monge-Amp\`{e}re equation with poles at righthand side. \ref{TCMAt} can be seen as regularizations of \ref{cxlim1}. We ask if the solutions of \ref{TCMAt} converge to a solution of \ref{cxlim1}. Starting from Yau's work \cite{Yau}, similar problems has been considered by many people. Due to the large progress made by Ko\l{}odziej\cite{Kol}, complex Monge-Amp\`{e}re equation can be solved with very general, usually singular, righthand side. Ko\l{}odziej's result was also proved by first regularizing the singular Monge-Amp\`{e}re equation, and then taking limit back to get solution of original equation.

We will derive several apriori estimate to prove Theorem \ref{thm2}.
For the $C^0$-estimate, the upper bound follows from how we
transform the potential function in \eqref{transform}. The lower
bound follows from a Harnack estimate for the transformed potential
function which we will prove using Tian's argument in \cite{T4}. For
the proof of partial $C^2$-estimate, higher order estimates and
convergence of solutions, we use some argument similar to that used
by Ruan-Zhang \cite{RuZh}, and Demailly and Pali \cite{DP}.

\subsection{\texorpdfstring{$C^0$}{C0}-estimate}
We first derive the $C^0$-estimate for $\psi=U-\tilde{u}_0$. Let $\bar{v}=\bar{v}(x)$ be a piecewise linear function defined to be
\[
\bar{v}(x)=\max_{p_\alpha} \langle p_\alpha, x\rangle
\]
Then $u_0$ is asymptotic to $\bar{v}$ and it's easy to see that $|\bar{v}-\tilde{u}_0|\le C$. So we only need to show that $|U(x)-\bar{v}(x)|\le C$. Here and in the following, $C$ is some constant independent of $t\in [0, R(X))$.

One side is easy. Since $DU(\mathbb{R}^n)=\triangle$ and $U(0)=0$, we have for any $x\in\mathbb{R}^n$,
$U(x)=U(x)-U(0)= DU(\xi)\cdot x\le \bar{v}(x)$.
$\xi$ is some point between 0 and $x$.  So
\[\psi=(U-\bar{v})+(\bar{v}-\tilde{u}_0)\le C\].


To prove the lower bound for $\psi$, we only need to prove a Harnack inequality
\begin{prop}
\begin{equation}\label{Harnack}
\sup_X(-\psi)\le n\sup_X \psi+C(n)t^{-1}
\end{equation}
\end{prop}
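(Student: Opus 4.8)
The plan is to derive the Harnack inequality $\sup_X(-\psi)\le n\sup_X\psi+C(n)t^{-1}$ by adapting Tian's argument from \cite{T4}, working with the complex Monge-Amp\`ere equation \ref{TCMAt} rather than its real toric form. First I would recall that $\psi=\psi_t=U-\tilde u_0$ is the potential with $\omega_\psi=\omega+\partial\bar\partial\psi>0$, and that \ref{TCMAt} reads
\[
\omega_\psi^n=e^{-t\psi}\Big(\sum_\alpha b(p_\alpha,t)\|s_\alpha\|^2\Big)^{-(1-t)}e^{h_\omega-w(x_t)}\omega^n.
\]
Since $0<b(p_\alpha,t)<1$ and $\sum_\alpha\|s_\alpha\|^2$ is comparable to a constant times $\sum_\alpha|s_\alpha|^2/\sum_\beta|s_\beta|^2\le 1$ (and the $w(x_t)$ term is bounded by Proposition \ref{wzest}(1)), the factor $\big(\sum_\alpha b(p_\alpha,t)\|s_\alpha\|^2\big)^{-(1-t)}$ is bounded below by a positive constant; an upper bound of the same quantity can blow up, but only in the ``good'' direction for the estimate we want. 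So up to harmless bounded factors the equation is $\omega_\psi^n\le C\,e^{-t\psi}\omega^n$ on $X$, which is the input needed.

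The core step is the Moser-type iteration. Set $\varphi=-\psi$ (so we want to bound $\sup\varphi$ by $n\sup(-\varphi)$ plus $C/t$); the standard trick is to integrate the equation against $e^{-p\psi}$ for $p\ge 0$ and use integration by parts to produce a reverse Sobolev/Poincar\'e-type inequality. Concretely, I would multiply \ref{TCMAt} through, move it to the form $(\omega+\partial\bar\partial\psi)^n = F\,\omega^n$ with $\log F\le -t\psi+C$, take $\int_X e^{-p\psi}(\omega_\psi^n-\omega^n)$, expand $\omega_\psi^n-\omega^n=\partial\bar\partial\psi\wedge\sum_{j=0}^{n-1}\omega_\psi^j\wedge\omega^{n-1-j}$, integrate by parts to get $\int_X \partial e^{-p\psi}\wedge\bar\partial\psi\wedge(\cdots)$ which is $-p\int e^{-p\psi}\partial\psi\wedge\bar\partial\psi\wedge(\cdots)\le 0$, and conclude $\int_X e^{-p\psi}\omega_\psi^n\le\int_X e^{-p\psi}\omega^n$. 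Feeding $\omega_\psi^n\ge c\,e^{-t\psi}\omega^n$ into the left side gives $\int_X e^{-(p+t)\psi}\omega^n\le C\int_X e^{-p\psi}\omega^n$, and iterating $p\mapsto p+t$ starting from $p=0$ yields control of all $L^q$ norms of $e^{-\psi}$ with constants of size $C^{q/t}$; combined with the Green's function / sub-mean-value inequality for $\Delta_\omega(-\psi)$ (using $\Delta_\omega\psi\ge -n$ from positivity of $\omega_\psi$, together with the volume-form bound), one upgrades $L^1$ control to the pointwise bound $\sup_X(-\psi)\le C\int_X(-\psi)\,\omega^n + C/t$. Finally, the $L^1$ mean is estimated by $\int_X(-\psi)\le \int_X(\psi_+ + \text{something})\le n\sup_X\psi+C$ via the elementary inequality relating $\int\psi$ and $\sup\psi$ for $\omega$-plurisubharmonic functions (the factor $n$ is exactly what the Green function comparison for an $\omega$-psh function costs).

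The main obstacle I anticipate is tracking the dependence on $t$ and on the possibly-degenerating weight $\big(\sum_\alpha b(p_\alpha,t)\|s_\alpha\|^2\big)^{-(1-t)}$ through the iteration, so that the final constant is genuinely of the form $C(n)t^{-1}$ and not worse (this is presumably the ``inaccuracy in the proof of Harnack inequality'' alluded to in the acknowledgements and Remark \ref{gap}). One has to be careful that the upper bound on the volume-form factor enters only on the side where it helps, and that each iteration step $p\mapsto p+t$ multiplies the $L^q$ constant by a uniformly bounded amount, so that after $\sim q/t$ steps the constant is $C^{q/t}$ and the $1/t$ emerges when one takes $q\to\infty$ in the Moser limit; making this bookkeeping airtight — rather than the formal structure of the iteration — is where the real work lies. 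The $t^{-1}$ is harmless for us in the end since we only take $t\to R(X)>0$.
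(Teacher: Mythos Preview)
Your approach is genuinely different from the paper's, and it contains a fatal sign error at the key step.

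The paper does \emph{not} use Moser iteration. Instead it follows Tian's original argument: it introduces a \emph{secondary} continuity path $(\omega+\partial\bar\partial\theta_s)^n=e^{-s\theta_s+F-B_t}\omega^n$ for $s\in[0,t]$, with the twist $B_t=(1-t)\log\big(\sum_\alpha b(p_\alpha,t)\|s_\alpha\|^2\big)$ held \emph{fixed}. Along this path $Ric(\omega_{\theta_s})\ge s\,\omega_{\theta_s}$, so $\lambda_1>s$ and $I-J$ is monotone in $s$. Differentiating $\int_X\theta_s\,\omega_{\theta_s}^n$ in $s$ and integrating from $0$ to $t$ yields $-\frac{1}{V}\int_X\psi\,\omega_\psi^n\le n\sup_X\psi$; combining with Bando--Mabuchi's Green-function estimate $\sup_X(-\psi)\le\frac{1}{V}\int_X(-\psi)\omega_\psi^n+C(n)t^{-1}$ gives the Harnack inequality. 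The issue in Remark~\ref{gap} is not about tracking the degenerating weight through an iteration; it is about solving the auxiliary equation \ref{news} all the way down to $s=0$, which in the paper's version uses Berndtsson's uniqueness theorem. The weight $B_t$ never causes trouble because $\partial\bar\partial B_t=(1-t)(\sigma^*\omega-\omega)$ is a difference of two genuine K\"ahler forms.

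Your iteration breaks at the very first step. Integration by parts gives
\[
\int_X e^{-p\psi}(\omega_\psi^n-\omega^n)=\int_X e^{-p\psi}\,\partial\bar\partial\psi\wedge T
= p\int_X e^{-p\psi}\,\sqrt{-1}\,\partial\psi\wedge\bar\partial\psi\wedge T\;\ge\;0,
\]
with $T=\sum_{j}\omega_\psi^j\wedge\omega^{n-1-j}\ge 0$. Hence $\int_X e^{-p\psi}\omega_\psi^n\ge\int_X e^{-p\psi}\omega^n$, the \emph{opposite} of what you wrote. With the correct sign you cannot feed in the lower bound $\omega_\psi^n\ge c\,e^{-t\psi}\omega^n$ to get $\int e^{-(p+t)\psi}\omega^n\le C\int e^{-p\psi}\omega^n$; you would need an \emph{upper} bound $\omega_\psi^n\le C e^{-t\psi}\omega^n$, and as you yourself note, the factor $\big(\sum_\alpha b(p_\alpha,t)\|s_\alpha\|^2\big)^{-(1-t)}$ blows up near the base locus, so no such uniform upper bound is available. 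The iteration therefore does not get started, and the later steps (Green function, $\omega$-psh mean bound) never come into play. This is why Tian's argument, and the paper, go through the $I$--$J$ machinery and the auxiliary $s$-path rather than a direct $L^p$ iteration.
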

For this we use the same idea of proof in \cite{T4}. First we rewrite the \ref{TCMAt} as
\begin{equation}\label{c0TCMAt}
(\omega+\partial\bar{\partial}\psi)^n=e^{-t\psi+F-B_t}\omega^n
\end{equation}
where
\[
B_t=(1-t)\log\left(\sum_\alpha b(p_\alpha,t)\|s_\alpha\|^2\right), \quad F=h_\omega-w(x_t)
\]

Now consider a new continuous family of equations
\begin{align}
 (\omega+\partial\bar{\partial}\theta_s)^n= e^{-s\theta_s+F-B_t}\omega^n\tag*{$\eqref{c0TCMAt}_s$}\label{news}
\end{align}

Define $S=\{s'\in [0,t]|  \mbox{\ref{news} is solvable for } s\in
[s',t]\}$. We want to prove $S=[0,t]$. Since \eqref{c0TCMAt} has a
solution $\psi$, $t\in S$ and $S$ is nonempty. It is sufficient to
show that $S$ is both open and closed.

For openness, we first estimate the first eigenvalue of the metric $g_\theta$ associated with the K\"{a}hler form $\omega_\theta=\omega+\partial\bar{\partial}\theta$ for the solution $\theta$ of \ref{news}.

\begin{eqnarray}\label{riclower}
Ric(\omega_\theta)&=&s\partial\bar{\partial}\theta-\partial\bar{\partial}F+\partial\bar{\partial}B_t+Ric(\omega)\nonumber\\
&=&s\partial\bar{\partial}\theta+\omega+(1-t)(\sigma^*\omega-\omega)=s(\partial\bar{\partial}\theta+\omega)+(t-s)\omega+(1-t)\sigma^*\omega\nonumber\\
&=&s\omega_\theta+(t-s)\omega+(1-t)\sigma^*\omega
\end{eqnarray}
In particular, $Ric(\omega_\theta)>s\omega_\theta$.  So by Bochner's formula, the first nonzero eigenvalue $\lambda_1(g_{\theta_s})>s$. This gives the invertibility of linearization operator $(-\Delta_s)-s$ of equation \ref{news}, so the openness of solution set $S$ follows.

To prove closedness, we need to derive apriori estimate. First define the functional:
\[
I(\theta_s)=\frac{1}{V}\int_X \theta_s(\omega^n-\omega_{\theta_s}^n), \quad J(\theta_s)=\int_0^1 \frac{I(x\theta_s)}{x}dx
\]
Then we have
\begin{lem}\cite{BM, T4}\label{IJrel}
 \begin{enumerate}
  \item[(i)]
\begin{equation}\label{IJineq}
 (n+1)J(\theta_s)/n\le I(\theta_s)\le (n+1)J(\theta_s)
\end{equation}
 \item[(ii)]
\[
\frac{d}{ds}(I(\theta_s)-J(\theta_s))=-\frac{1}{V}\int_X\theta_s(\Delta_s\dot{\theta_s})\omega_{\theta_s}^n
\]

 \end{enumerate}
\end{lem}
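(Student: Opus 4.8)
The plan is to obtain both items as the standard first-variation identities for the Aubin functionals $I$ and $J$, using the affine segment from $\omega$ to $\omega_{\theta_s}$ and integration by parts; the only geometric input is that $\omega_{\theta_s}=\omega+\partial\bar{\partial}\theta_s$ is a genuine K\"ahler metric, which holds because $\theta_s$ solves \ref{news}. Fix $s$, write $\phi=\theta_s$ and $\omega_\phi=\omega+\partial\bar{\partial}\phi>0$, and set $\omega_\tau=(1-\tau)\omega+\tau\omega_\phi=\omega+\tau\partial\bar{\partial}\phi$ for $\tau\in[0,1]$; this is a path of genuine K\"ahler forms since both endpoints are positive. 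Put $f(\tau)=\frac{1}{V}\int_X\phi(\omega^n-\omega_\tau^n)$, so $f(0)=0$, $I(\phi)=f(1)$, and, since $I(x\phi)=xf(x)$, also $J(\phi)=\int_0^1 f(\tau)\,d\tau$.

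For (i), I would differentiate $f$ and integrate by parts to get $f'(\tau)=\frac{n}{V}\int_X\sqrt{-1}\,\partial\phi\wedge\bar{\partial}\phi\wedge\omega_\tau^{n-1}$, then expand $\omega_\tau^{n-1}=\sum_{j=0}^{n-1}\binom{n-1}{j}(1-\tau)^{n-1-j}\tau^j\,\omega^{n-1-j}\wedge\omega_\phi^{j}$, which for $\tau\in[0,1]$ exhibits $f'$ as a combination with nonnegative coefficients of the quantities $E_j=\frac{1}{V}\int_X\sqrt{-1}\,\partial\phi\wedge\bar{\partial}\phi\wedge\omega^{n-1-j}\wedge\omega_\phi^{j}$, each $\ge 0$ because $\sqrt{-1}\,\partial\phi\wedge\bar{\partial}\phi$, $\omega$ and $\omega_\phi$ are nonnegative $(1,1)$-forms. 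Using $I(\phi)=\int_0^1 f'(u)\,du$, $J(\phi)=\int_0^1(1-u)f'(u)\,du$ and the elementary identities $\int_0^1(1-u)^{a}u^{b}\,du=\frac{a!\,b!}{(a+b+1)!}$, one finds $I(\phi)=\frac{1}{n}\sum_{j}E_j$ and $J(\phi)=\frac{1}{n(n+1)}\sum_{j}(n-j)E_j$; since each $n-j\in\{1,\dots,n\}$, the ratio $J(\phi)/I(\phi)$ equals $\frac{1}{n+1}$ times a convex combination of $\{1,\dots,n\}$, hence lies in $[\frac{1}{n+1},\frac{n}{n+1}]$, which is exactly \eqref{IJineq}.

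For (ii), I would compute the first variations of $I$ and $J$ along a path $\{\theta_s\}$ with $\dot\theta_s=\partial_s\theta_s$. Differentiating under the integral sign and integrating by parts once gives $\frac{d}{ds}I(\theta_s)=\frac{1}{V}\int_X\dot\theta_s(\omega^n-\omega_{\theta_s}^n)-\frac{n}{V}\int_X\dot\theta_s\,\partial\bar{\partial}\theta_s\wedge\omega_{\theta_s}^{n-1}$; an analogous computation, in which a fundamental-theorem-of-calculus boundary term in the auxiliary scaling parameter collapses, yields $\frac{d}{ds}J(\theta_s)=\frac{1}{V}\int_X\dot\theta_s(\omega^n-\omega_{\theta_s}^n)$. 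Subtracting, $\frac{d}{ds}(I-J)(\theta_s)=-\frac{n}{V}\int_X\dot\theta_s\,\partial\bar{\partial}\theta_s\wedge\omega_{\theta_s}^{n-1}$; moving $\partial\bar{\partial}$ from $\theta_s$ onto $\dot\theta_s$ by integration by parts (legitimate since $\omega_{\theta_s}^{n-1}$ is $d$-closed) and using $n\,\partial\bar{\partial}\dot\theta_s\wedge\omega_{\theta_s}^{n-1}=(\Delta_s\dot\theta_s)\,\omega_{\theta_s}^n$ produces the asserted formula.

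I do not expect a serious obstacle, since both identities are classical (Aubin; Bando--Mabuchi; Tian). The points that require care are the positivity bookkeeping in (i) — that the affine path stays K\"ahler and that each $E_j\ge 0$, which is precisely where $\omega_{\theta_s}>0$ enters — together with tracking the binomial and Beta coefficients so that the sharp constants $\frac{1}{n+1}$ and $\frac{n}{n+1}$ emerge; in (ii) the only subtlety is the single integration by parts and the identification of the Laplacian term.
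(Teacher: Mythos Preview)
The paper does not actually prove this lemma; it simply cites it from \cite{BM,T4} and moves on. Your argument is the standard one from those references and is correct: the affine-path computation of $f'(\tau)$, the expansion in the nonnegative quantities $E_j$, and the Beta-integral bookkeeping give exactly the inequalities \eqref{IJineq}, and the first-variation computation for $I-J$ is the classical one. One small arithmetic slip: with your definitions $f'(\tau)=n\sum_j\binom{n-1}{j}(1-\tau)^{n-1-j}\tau^j E_j$, so integrating against $1$ and against $(1-u)$ gives $I(\phi)=\sum_j E_j$ and $J(\phi)=\frac{1}{n+1}\sum_j(n-j)E_j$, not $\frac{1}{n}\sum_j E_j$ and $\frac{1}{n(n+1)}\sum_j(n-j)E_j$; the ratio $J/I$ is unaffected, so the conclusion stands.
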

Using $\lambda_1(g_{\theta_s})>s$,  Lemma \ref{IJrel}.(ii) gives
\begin{lem}\cite{BM,T4}\label{I-Jdec}
 $I(\theta_s)-J(\theta_s)$ is monotonically increasing.
\end{lem}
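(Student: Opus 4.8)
The plan is to differentiate the auxiliary family \ref{news} in the parameter $s$ and feed the result into Lemma \ref{IJrel}(ii). Taking logarithms in \ref{news} and differentiating in $s$ (the terms $F$ and $B_t$ being $s$-independent) gives the linearized identity
\[
\Delta_s\dot\theta_s+s\dot\theta_s=-\theta_s,
\]
where $\Delta_s=\mathrm{tr}_{\omega_{\theta_s}}\partial\bar\partial$ is the Laplacian of $\omega_{\theta_s}=\omega+\partial\bar\partial\theta_s$ (so that $-\Delta_s$ has non-negative spectrum) and $\dot\theta_s=\tfrac{d}{ds}\theta_s$. Because $\lambda_1(g_{\theta_s})>s>0$ the operator $-\Delta_s-s$ is invertible on all of $C^\infty(X)$, so $\dot\theta_s$ is well defined and depends smoothly on $s$. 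Substituting $\Delta_s\dot\theta_s=-\theta_s-s\dot\theta_s$ into Lemma \ref{IJrel}(ii) then yields
\[
\frac{d}{ds}\bigl(I(\theta_s)-J(\theta_s)\bigr)=-\frac1V\int_X\theta_s\,(\Delta_s\dot\theta_s)\,\omega_{\theta_s}^n=\frac1V\int_X\theta_s\,(\theta_s+s\dot\theta_s)\,\omega_{\theta_s}^n,
\]
so everything reduces to showing this last integral is non-negative.

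For that I would expand $\theta_s=\sum_{k\ge0}a_k\phi_k$ in an orthonormal basis (for $\tfrac1V\int_X\,\cdot\,\omega_{\theta_s}^n$) of eigenfunctions of $-\Delta_s$, with eigenvalues $0=\mu_0<\mu_1\le\mu_2\le\cdots$ and $\phi_0\equiv1$. Then $\dot\theta_s=\sum_{k\ge0}\frac{a_k}{\mu_k-s}\phi_k$, and a short computation gives
\[
\frac{d}{ds}\bigl(I(\theta_s)-J(\theta_s)\bigr)=\sum_{k\ge0}a_k^2+s\sum_{k\ge0}\frac{a_k^2}{\mu_k-s}=\sum_{k\ge1}a_k^2\,\frac{\mu_k}{\mu_k-s},
\]
the two $k=0$ contributions cancelling. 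Since $\mu_k\ge\mu_1>s>0$ for every $k\ge1$ — this is precisely the spectral gap coming from $Ric(\omega_{\theta_s})>s\omega_{\theta_s}$ via Bochner's formula, recorded in the curvature computation \eqref{riclower} just above — each summand is $\ge0$, hence $I(\theta_s)-J(\theta_s)$ is non-decreasing in $s$. Equivalently, after one integration by parts the right-hand side equals $\frac1V\int_X(\Delta_s\dot\theta_s)^2\,\omega_{\theta_s}^n-\frac sV\int_X|\nabla\dot\theta_s|^2\,\omega_{\theta_s}^n$, and the inequality $\int_X(\Delta_s\dot\theta_s)^2\ge\mu_1\int_X|\nabla\dot\theta_s|^2>s\int_X|\nabla\dot\theta_s|^2$ is immediate from the Poincaré inequality (applied to $-\Delta_s\dot\theta_s$, which has mean zero), so no spectral decomposition is strictly needed.

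I do not expect a genuine obstacle: this is the classical Bando--Mabuchi monotonicity computation, and its only delicate points are bookkeeping. One must track the constant eigenfunction $\phi_0$, whose coefficient $-a_0/s$ in $\dot\theta_s$ blows up as $s\to0$; it drops out of the final formula because $I$ and $J$ are invariant under adding a constant to $\theta_s$, so $\tfrac{d}{ds}(I-J)$ extends continuously to $s=0$ and the monotonicity holds on the whole closed interval $[0,t]$. One must also fix the sign convention for $\Delta_s$ so that the linearized operator $-\Delta_s-s$ is positive definite on the $L^2$-orthogonal complement of the constants; with the convention pinned down, the rest is the routine calculation above.
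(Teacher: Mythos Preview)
Your argument is correct and is exactly the approach the paper indicates: the paper does not spell out a proof but simply records that the lemma follows from the eigenvalue bound $\lambda_1(g_{\theta_s})>s$ together with Lemma~\ref{IJrel}(ii), citing \cite{BM,T4}. Your computation (differentiating \ref{news} to get $\Delta_s\dot\theta_s=-\theta_s-s\dot\theta_s$, substituting into Lemma~\ref{IJrel}(ii), and concluding non-negativity via the spectral gap) is precisely the standard Bando--Mabuchi calculation being invoked, and indeed the same linearized identity $\Delta_s\dot\theta=-\theta-s\dot\theta$ reappears verbatim a few lines later in the paper's proof of \eqref{tianest}.
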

Let's recall Bando-Mabuchi's estimate for Green function.
\begin{prop}\cite{BM}
For every m-dimensional compact Riemannian manifold (X,g) with

$diam(X,g)^2 Ric(g)\ge -(m-1)\alpha^2$, there exists a positive constant $\gamma=\gamma(m,\alpha)$ such that
\begin{equation}\label{estgreen}
G_g(x,y)\ge -\gamma(m,\alpha)diam(X,g)^2/V_g
\end{equation}
Here the Green function $G_g(x,y)$ is normalized to satisfy
\[
\int_M G_g(x,y)dV_g(x)=0
\]
\end{prop}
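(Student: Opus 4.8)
\medskip\noindent\emph{Sketch of proof of the Green's-function bound.} One clean route goes through the heat kernel. Since the rescaling $g\mapsto c^2g$ leaves the quantity $G_g(x,y)\,V_g/diam(X,g)^2$ invariant, I would first normalize $diam(X,g)=1$, so that the hypothesis reads $Ric(g)\ge-(m-1)\alpha^2$, and it then suffices to produce $\gamma=\gamma(m,\alpha)$ with $G_g(x,y)\ge-\gamma/V_g$ for $x\ne y$ (the inequality is trivial when $x=y$, where $G_g(x,x)=+\infty$). Let $H(x,y,t)$ be the heat kernel of $\Delta_g$, normalized by $\int_M H(x,y,t)\,dV_g(y)=1$. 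Because $\int_M\bigl(H(x,y,t)-1/V_g\bigr)\,dV_g(x)=0$ for every $t$ and $H(x,y,t)\to1/V_g$ as $t\to\infty$, the function
\[
G_g(x,y)=\int_0^{\infty}\Bigl(H(x,y,t)-\tfrac1{V_g}\Bigr)\,dt
\]
has the correct normalization and satisfies $-\Delta_xG_g(x,y)=\delta_y-1/V_g$, hence is the Green function; the plan is to split this integral at $t=1$ and bound the two ranges.

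For $0\le t\le1$ positivity of the heat kernel, $H\ge0$, gives at once
\[
\int_0^{1}\Bigl(H(x,y,t)-\tfrac1{V_g}\Bigr)\,dt\ \ge\ -\frac1{V_g}.
\]
The substance is to show that on $t\ge1$ the heat kernel returns to its average exponentially fast. Here I would combine three standard consequences of $Ric(g)\ge-(m-1)\alpha^2$ with $diam(X,g)=1$: (a) the Li--Yau Gaussian upper bound together with Bishop--Gromov volume comparison, which yields an on-diagonal bound $H(x,x,1)\le C_1(m,\alpha)/V_g$; (b) the Li--Yau (sharpened by Zhong--Yang) lower bound $\lambda_1(g)\ge c_0(m,\alpha)>0$ for the first nonzero eigenvalue; and (c) the spectral expansion $H(x,x,t)-1/V_g=\sum_{k\ge1}e^{-\lambda_kt}\varphi_k(x)^2$, which is nonnegative and nonincreasing in $t$, so that for $t\ge1$
\[
0\le H(x,x,t)-\tfrac1{V_g}\le e^{-\lambda_1(t-1)}\Bigl(H(x,x,1)-\tfrac1{V_g}\Bigr)\le\frac{C_1(m,\alpha)}{V_g}\,e^{-\lambda_1(t-1)}.
\]

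To pass to the off-diagonal estimate I would use the semigroup identity $H(x,y,2s)-1/V_g=\int_M\bigl(H(x,z,s)-1/V_g\bigr)\bigl(H(z,y,s)-1/V_g\bigr)\,dV_g(z)$ and Cauchy--Schwarz, which give $|H(x,y,t)-1/V_g|\le\bigl(H(x,x,t)-1/V_g\bigr)^{1/2}\bigl(H(y,y,t)-1/V_g\bigr)^{1/2}$ for every $t>0$, whence for $t\ge1$
\[
\bigl|H(x,y,t)-\tfrac1{V_g}\bigr|\le\frac{C_1(m,\alpha)}{V_g}\,e^{-\lambda_1(t-1)}.
\]
Integrating over $t\in[1,\infty)$ and using $\lambda_1\ge c_0(m,\alpha)$ bounds the tail contribution in absolute value by $C_1(m,\alpha)/\bigl(c_0(m,\alpha)V_g\bigr)$. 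Adding this to the short-time bound gives $G_g(x,y)\ge-\bigl(1+C_1/c_0\bigr)/V_g$, which is the claim with $\gamma(m,\alpha)=1+C_1(m,\alpha)/c_0(m,\alpha)$; undoing the initial rescaling reinstates the factor $diam(X,g)^2$.

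The main obstacle is step (a): extracting the uniform on-diagonal bound $H(x,x,1)\le C_1(m,\alpha)/V_g$ from nothing but the lower Ricci bound. This is exactly where the Li--Yau parabolic gradient estimate enters (equivalently, volume doubling together with a scaled Neumann--Poincar\'e inequality, both consequences of $Ric(g)\ge-(m-1)\alpha^2$ on a manifold of diameter one), and one must track that every constant produced depends only on $m$ and $\alpha$. If one is content to quote this on-diagonal bound from Li--Yau (or Davies), the remaining ingredients --- the heat-kernel representation, the spectral-gap estimate, the semigroup/Cauchy--Schwarz step, and the time integration --- are routine, and the argument closes.
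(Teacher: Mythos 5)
This proposition is quoted in the paper from Bando--Mabuchi \cite{BM} without proof, so there is no in-paper argument to compare against; judged on its own, your heat-kernel proof is correct, and it is in fact essentially the classical route (the argument in the appendix of \cite{BM} likewise runs through Cheng--Li/Li--Yau heat-kernel upper bounds and an eigenvalue gap). The scaling reduction to $diam=1$, the representation $G=\int_0^\infty(H-1/V_g)\,dt$, the trivial bound $\ge -1/V_g$ on $[0,1]$, the semigroup/Cauchy--Schwarz reduction of the off-diagonal to the on-diagonal, and the spectral decay for $t\ge 1$ are all sound, and the two quoted inputs --- the on-diagonal bound $H(x,x,1)\le C_1(m,\alpha)/V_g$ and $\lambda_1\ge c_0(m,\alpha)$ --- are standard consequences of the hypotheses with constants depending only on $(m,\alpha)$, so the argument closes as you describe.
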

Bando-Mabuchi used this estimate to prove the key estimate:
\begin{prop}\cite{BM}
Let
\[
\mathcal{H}^s=\{\theta\in C^\infty(X); \omega_\theta=\omega+\partial\bar{\partial}\theta>0, Ric(\omega_\theta)\ge s\omega_\theta\}
\]
then for any $\theta\in\mathcal{H}^s$, we have
\begin{enumerate}
 \item[(1)]
\begin{equation}\label{infphi}
\sup_X(-\theta)\le\frac{1}{V}\int_X(-\theta)\omega_\theta^n+C(n)s^{-1}
\end{equation}
\item[(2)]
\begin{equation}\label{oscest}
 Osc(\theta)\le I(\theta)+C(n)s^{-1}
\end{equation}

\end{enumerate}
\end{prop}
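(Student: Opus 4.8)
The plan is to reproduce, in the present setting, the Green's function argument of Bando--Mabuchi \cite{BM}. It rests on two ingredients: (a) the elementary bound $\Delta_\theta\theta\le n$, which follows from $\omega_\theta=\omega+\partial\bar{\partial}\theta>0$ since $\Delta_\theta\theta=\mathrm{tr}_{\omega_\theta}(\partial\bar{\partial}\theta)=\mathrm{tr}_{\omega_\theta}(\omega_\theta-\omega)=n-\mathrm{tr}_{\omega_\theta}\omega$; and (b) the Green function lower bound \eqref{estgreen} for the metric $g_\theta$ of $\omega_\theta$. To make \eqref{estgreen} $s$-explicit, observe that $Ric(\omega_\theta)\ge s\omega_\theta$ is equivalent, for the underlying real metric in real dimension $2n$, to $Ric(g_\theta)\ge s\,g_\theta$, so Myers' theorem gives $diam(X,g_\theta)^2\le\pi^2(2n-1)s^{-1}$. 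Since $Ric(g_\theta)\ge 0$, \eqref{estgreen} applies with $\alpha=0$, and together these produce a constant $c=c(n,s)=C(n)s^{-1}/V\ge 0$ with $G_\theta+c\ge 0$ on $X\times X$.

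For (1), I would start from the Green representation formula for $\theta$ with respect to $g_\theta$,
\[
\theta(x)=\frac{1}{V}\int_X\theta\,\omega_\theta^n-\int_X G_\theta(x,y)\,\Delta_\theta\theta(y)\,\omega_\theta^n(y).
\]
Since $\int_X\Delta_\theta\theta\,\omega_\theta^n=\int_X\mathrm{tr}_{\omega_\theta}(\partial\bar{\partial}\theta)\,\omega_\theta^n=0$, subtracting the constant $c$ from $G_\theta$ does not change the last integral, and then $G_\theta+c\ge 0$ together with $\Delta_\theta\theta\le n$ gives
\[
-\int_X G_\theta(x,y)\,\Delta_\theta\theta(y)\,\omega_\theta^n(y)=-\int_X\bigl(G_\theta(x,y)+c\bigr)\Delta_\theta\theta(y)\,\omega_\theta^n(y)\ge -n\int_X\bigl(G_\theta(x,y)+c\bigr)\omega_\theta^n(y)=-ncV.
\]
Hence $\theta(x)\ge\frac{1}{V}\int_X\theta\,\omega_\theta^n-ncV=\frac{1}{V}\int_X\theta\,\omega_\theta^n-C(n)s^{-1}$ for every $x\in X$, which is exactly \eqref{infphi}.

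For (2), I would combine \eqref{infphi} with the complementary upper bound $\sup_X\theta\le\frac{1}{V}\int_X\theta\,\omega^n+C(n)$, obtained by the identical argument for the fixed reference metric $\omega$: there $\Delta_\omega\theta=\mathrm{tr}_\omega\omega_\theta-n\ge -n$, the Green function $G_\omega$ is bounded below by a constant depending only on $(X,\omega)$ (and $\omega=\omega_{FS}$ is fixed throughout), and $\int_X\Delta_\omega\theta\,\omega^n=0$. Adding the two estimates,
\[
Osc(\theta)=\sup_X\theta+\sup_X(-\theta)\le\frac{1}{V}\int_X\theta\,\omega^n-\frac{1}{V}\int_X\theta\,\omega_\theta^n+C(n)+C(n)s^{-1}=I(\theta)+C(n)\bigl(1+s^{-1}\bigr),
\]
and since $s\le t<R(X)\le 1$ in all of our applications, $C(n)(1+s^{-1})\le C(n)s^{-1}$, which is \eqref{oscest}.

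Once \eqref{estgreen} is granted, the argument is essentially bookkeeping, so I do not expect a serious obstacle; the points requiring care are tracking that the error constant depends on $s$ only through the Myers bound $diam(X,g_\theta)^2\le\pi^2(2n-1)s^{-1}$ (the volume $V$ cancels against the $1/V$ in \eqref{estgreen}), and the standard translation between the complex Ricci bound $Ric(\omega_\theta)\ge s\omega_\theta$ and the real one needed for Myers. The constant in the auxiliary bound $\sup_X\theta\le\frac{1}{V}\int_X\theta\,\omega^n+C(n)$ a priori depends on $(X,\omega)$, but this is harmless here since $\omega$ is fixed; Bando--Mabuchi state it with a universal constant after normalizing $\omega$.
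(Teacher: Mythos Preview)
The paper does not actually prove this proposition; it is stated with attribution to \cite{BM} and used as a black box (the preceding Green-function bound \eqref{estgreen} is likewise quoted without proof). Your proposal correctly reconstructs the standard Bando--Mabuchi argument: the Green representation for $\theta$ with respect to $g_\theta$, the Laplacian bound $\Delta_\theta\theta\le n$, and the Myers diameter estimate combined with \eqref{estgreen} to control the Green-function lower bound by $C(n)s^{-1}/V$. The derivation of \eqref{infphi} is clean and correct, and the passage to \eqref{oscest} via the companion estimate $\sup_X\theta\le\frac{1}{V}\int_X\theta\,\omega^n+C$ is the standard route. Your caveat about the $\omega$-dependence of that auxiliary constant is accurate and appropriately handled: since $\omega$ is fixed throughout the paper, this is harmless for the application, even if the notation $C(n)$ in the stated proposition is slightly imprecise.
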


\begin{prop}\label{solve0}
 \ref{news} is solvable for $0\le s\le t$.
\end{prop}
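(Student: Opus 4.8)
# Proof Proposal for Proposition (solvability of $\eqref{c0TCMAt}_s$ for $0 \le s \le t$)

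The plan is to run the standard continuity-method argument on the family $\eqref{c0TCMAt}_s$ with $s$ decreasing from $t$ to $0$. Openness of the solution set $S$ has already been established above via the Ricci lower bound $Ric(\omega_{\theta_s}) > s\omega_{\theta_s}$ (from \eqref{riclower}) and the consequent spectral gap $\lambda_1(g_{\theta_s}) > s$, which makes the linearized operator $(-\Delta_s) - s$ invertible. So the work is entirely in proving that $S$ is closed, i.e.\ in establishing a priori estimates uniform in $s \in [0,t]$ (and, one should note, \emph{not} necessarily uniform in $t$ — a $t$-dependent constant is allowed here, since $t$ is fixed along this family). By Yau's estimates and the Evans–Krylov/Schauder machinery, it suffices to obtain a uniform $C^0$ bound on $\theta_s$; the key point is that the right-hand side exponent $-B_t = -(1-t)\log(\sum_\alpha b(p_\alpha,t)\|s_\alpha\|^2)$ is a \emph{fixed} (for fixed $t$) $L^\infty$ function — it is bounded above since $\sum_\alpha b(p_\alpha,t)\|s_\alpha\|^2 \le \sum_\alpha \|s_\alpha\|^2$ is bounded, so $B_t$ has a two-sided bound away from the base locus but only a one-sided (upper) bound in general; what matters is $e^{-B_t}$ is bounded, which it is since $B_t$ is bounded above, giving $e^{F - B_t} \in L^\infty$.

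The core of the closedness argument is the normalization/$C^0$ estimate. First I would normalize $\theta_s$ so that $\sup_X \theta_s = 0$ (using the freedom to add constants once $s > 0$; the case $s = 0$ near the endpoint is handled by continuity and the uniform estimates from $s > 0$). Then the upper bound $\sup_X \theta_s = 0$ is automatic. For the lower bound, I would invoke the Bando–Mabuchi package quoted above: from \eqref{infphi} applied with the Ricci lower bound constant $s$,
\[
\sup_X(-\theta_s) \le \frac{1}{V}\int_X (-\theta_s)\,\omega_{\theta_s}^n + C(n)s^{-1},
\]
and then control $\frac{1}{V}\int_X(-\theta_s)\omega_{\theta_s}^n$ by the functional $I(\theta_s)$ together with the monotonicity from Lemma \ref{I-Jdec}: since $I(\theta_s) - J(\theta_s)$ is increasing in $s$ and $\eqref{c0TCMAt}_t = \eqref{c0TCMAt}$ has the known solution $\psi$, we get $I(\theta_s) - J(\theta_s) \le I(\psi) - J(\psi)$, which combined with \eqref{IJineq} bounds $I(\theta_s)$ from above. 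Feeding this back gives $\sup_X(-\theta_s) \le C(n)s^{-1} + C$, uniformly for $s$ bounded below. This is exactly the Harnack-type inequality \eqref{Harnack} in disguise, and it is the structural heart of the proof.

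There is a genuine subtlety at $s = 0$: the estimate $C(n)s^{-1}$ blows up as $s \to 0$, so the naive argument only gives solvability on $[\epsilon, t]$ for every $\epsilon > 0$. The hard part — and I expect this to be the main obstacle — is closing the gap down to $s = 0$. The standard remedy is that the equation at $s = 0$,
\[
(\omega + \partial\bar\partial\theta_0)^n = e^{F - B_t}\omega^n,
\]
is a Monge–Ampère equation with \emph{fixed} bounded right-hand side and no $\theta_0$-dependence on the right, so one can solve it directly by Yau's theorem (after checking the normalization $\int_X e^{F - B_t}\omega^n = \int_X \omega^n$, which holds by the way the potentials were normalized — this is essentially \eqref{torich} transported through the transformation \eqref{transform}, or can be arranged by adjusting the constant $w(x_t)$). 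Thus $0 \in S$ follows not from a limiting argument but from an independent existence statement, and then the uniform estimates for $s \in (0,t]$ plus this endpoint give the full interval. Once $C^0$ is in hand, the second-order estimate follows from Yau's computation (using $Ric(\omega_{\theta_s}) \ge s\omega_{\theta_s} + (t-s)\omega + (1-t)\sigma^*\omega$, and in particular $Ric$ bounded below by a fixed metric up to the controlled term), and higher regularity is routine; this closes $S$ and completes the proof.
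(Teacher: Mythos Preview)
Your overall strategy matches the paper's: openness via the spectral gap, $C^0$ estimates on $[\delta,t]$ via monotonicity of $I-J$ (Lemma~\ref{I-Jdec}) and the Bando--Mabuchi Green's function bound, then Yau's theorem at $s=0$. Two points deserve correction.

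First, the normalization step is backwards. For $s>0$ the equation \ref{news} depends on $\theta_s$ itself (not only on $\partial\bar\partial\theta_s$), so there is \emph{no} freedom to add a constant; the freedom exists precisely at $s=0$. Without the normalization $\sup_X\theta_s=0$, your chain of inequalities only bounds $\sup_X(-\theta_s)$ and leaves $\sup_X\theta_s$ uncontrolled. The paper closes this by observing that at some point $x_s$ the exponent on the right vanishes, giving $|\theta_s(x_s)|\le s^{-1}\|F-B_t\|_{L^\infty}\le C_t s^{-1}$; then $\sup_X\theta_s\le \theta_s(x_s)+\operatorname{Osc}(\theta_s)$ together with \eqref{oscest}, $I\le (n+1)(I-J)$ from \eqref{IJineq}, and Lemma~\ref{I-Jdec} yields $\|\theta_s\|_{C^0}\le C\delta^{-1}$ on $[\delta,t]$. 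This is the fix your argument needs.

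Second, the paper uses one ingredient you omit. Proposition~\ref{solve0} is invoked in the proof of \eqref{tianest}, where one differentiates $\theta_s$ in $s$ and integrates from $0$ to $t$; this requires a single \emph{smooth} family on $[0,t]$, not merely pointwise solvability. The continuity method from $s=t$ downward produces a family on every $[\delta,t]$, while Yau plus the implicit function theorem produces another family on $[0,\tau)$; on the overlap these a priori could differ. The paper appeals to Berndtsson's uniqueness theorem \cite{Bern} for the twisted K\"ahler--Einstein equation \eqref{riclower} to conclude they coincide. Your sketch (``uniform estimates for $s\in(0,t]$ plus this endpoint give the full interval'') glosses over this gluing; as Remark~\ref{gap} notes, one can alternatively avoid Berndtsson by reorganizing Tian's Harnack argument, but some device is needed.
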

\begin{proof}
From \ref{news}, there exists $x_s\in X$ such that
$
-s\theta_s(x_s)+F(x_s)-B_t(x_s)=0
$, so $|\theta_s(x_s)|=\frac{1}{s}|F-B_t|(x_s)\le C_ts^{-1}$.  By \eqref{oscest} and $I\le (n+1)(I-J)$ (by \eqref{IJineq}), we get 
\[
\sup_X\theta_s\le Osc(\theta)+\theta(x_s)\le (n+1) (I-J)(\theta)+C(n)s^{-1}+C_ts^{-1}
\]
By Lemma \ref{I-Jdec}, for any $\delta>0$, we get uniform estimate for $\sup_X\theta_s$ and hence also $\inf_X\theta_s$ for $s\in [\delta, t]$. So $\|\theta_s\|_{C^0}\le C\delta^{-1}$. We can use Yau's estimate to get $C^2$ and higher order estimate. So we can solve \ref{news} for $s\in [\delta, t]$, for any $\delta>0$. 

On the otherhand, by Yau's theorem, we can solve \ref{news} for $s=0$. And by implicit function theorem, we can solve
\ref{news} for $s\in [0, \tau)$ for $\tau$ sufficiently small. We can pick $\delta$ such that $\delta<\tau$, so we get solution of \ref{news} for $s\in [\delta, \tau)$ in two ways. They must coincide by the recent work of Berndtsson \cite{Bern} on the uniqueness of solutions for the twisted K\"{a}hler-Einstein equation \eqref{riclower}. So we complete the proof.
\end{proof}

Then one can use the same argument as in \cite{T4} to prove
\begin{prop}\cite{T4}
\begin{equation}\label{tianest}
-\frac{1}{V}\int_X\theta \omega_\theta^n\le \frac{n}{V}\int_X\theta\omega^n\le n\sup_X\theta
\end{equation}
\end{prop}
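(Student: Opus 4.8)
The second inequality is immediate: since $\omega_\theta=\omega+\partial\bar\partial\theta$ is cohomologous to $\omega$ we have $\frac1V\int_X\omega^n=1$, hence $\frac1V\int_X\theta\,\omega^n\le\sup_X\theta$. For the first inequality I would run the argument of \cite{T4}, using in an essential way that $\theta=\theta_s$ solves \ref{news} rather than being an arbitrary potential (the inequality is plainly false for a general $\omega$-plurisubharmonic $\theta$). Taking $\partial\bar\partial\log$ of both sides of \ref{news} and using $\partial\bar\partial F=\partial\bar\partial h_\omega$ together with $\partial\bar\partial B_t=(1-t)(\sigma^*\omega-\omega)$ — the computation already carried out in \eqref{riclower} — gives
\[
Ric(\omega_\theta)=\omega+s\,\partial\bar\partial\theta+(1-t)(\sigma^*\omega-\omega)=s\,\omega_\theta+(t-s)\,\omega+(1-t)\,\sigma^*\omega ,
\]
so $Ric(\omega_\theta)-s\,\omega_\theta$ is a closed semipositive $(1,1)$-form and, as in the openness step above, $\lambda_1(g_\theta)>s$.

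The plan is then to interpolate between $\omega^n$ and $\omega_\theta^n$ along the closed $(n,n)$-forms $\alpha_k:=\omega^{n-k}\wedge\omega_\theta^{k}$, $k=0,\dots,n$, all cohomologous to $\omega^n$. From $\alpha_k-\alpha_{k+1}=-\partial\bar\partial\theta\wedge\omega^{n-1-k}\wedge\omega_\theta^{k}$ and one integration by parts (legitimate since each $\omega^{n-1-k}\wedge\omega_\theta^{k}$ is $d$-closed), with $a_k:=\frac1V\int_X\theta\,\alpha_k$,
\[
a_k-a_{k+1}=\frac1V\int_X\sqrt{-1}\,\partial\theta\wedge\bar\partial\theta\wedge\omega^{n-1-k}\wedge\omega_\theta^{k}\ \ge\ 0 ,
\]
so $a_0\ge a_1\ge\cdots\ge a_n$; in particular $\frac1V\int_X\theta\,\omega_\theta^n=a_n\le a_0=\frac1V\int_X\theta\,\omega^n$, which recovers the "halfway" bound. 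To upgrade this to the quantitative inequality $-a_n\le n\,a_0$, I would, while working back from $a_n$ towards $a_0$, at each stage replace the factor of $\omega_\theta$ being removed by the expression $s^{-1}\big(Ric(\omega_\theta)-(t-s)\omega-(1-t)\sigma^*\omega\big)$ supplied by \eqref{riclower}, discard the pieces built from $(t-s)\omega\ge0$ and $(1-t)\sigma^*\omega\ge0$, and control the residual Ricci-weighted terms $\frac1V\int_X\theta\,Ric(\omega_\theta)\wedge(\cdots)$ using the eigenvalue bound $\lambda_1(g_\theta)>s$ (equivalently, the Bochner/Poincaré inequality on $(X,g_\theta)$). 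Carrying this through the $n-1$ remaining factors of $\omega_\theta^{n-1}$ and summing is meant to produce precisely the coefficient $n$.

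The main obstacle is exactly this last bookkeeping: each substitution $\omega_\theta\mapsto s^{-1}(Ric(\omega_\theta)-(t-s)\omega-(1-t)\sigma^*\omega)$ generates mixed gradient terms weighted by $Ric(\omega_\theta)$ whose signs are not all manifestly favourable, and one must check both that the favourable contributions dominate and that the accumulated constant does not exceed $n$ — this is where one leans on the organisation of the argument in \cite{T4}, and where, as recorded in Remark \ref{gap}, care is required. Granting \eqref{tianest}, the Harnack inequality \eqref{Harnack} follows immediately: apply the Bando--Mabuchi estimate \eqref{infphi} with $\theta=\psi$, $s=t$ (legitimate since $Ric(\omega_\psi)=t\,\omega_\psi+(1-t)\sigma^*\omega>t\,\omega_\psi$ by \eqref{riclower}) and then feed in \eqref{tianest}, obtaining $\sup_X(-\psi)\le \frac1V\int_X(-\psi)\,\omega_\psi^n+C(n)t^{-1}\le n\sup_X\psi+C(n)t^{-1}$.
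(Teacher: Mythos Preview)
Your sketch does not match the paper's argument, and the part you flag as ``bookkeeping'' is in fact where your approach breaks down: there is no clean way to iterate the substitution $\omega_\theta\mapsto s^{-1}\bigl(Ric(\omega_\theta)-(t-s)\omega-(1-t)\sigma^*\omega\bigr)$ through the mixed forms and arrive at the sharp constant $n$, and the eigenvalue bound $\lambda_1(g_\theta)>s$ alone is not the mechanism here. The inequality \eqref{tianest} is \emph{not} proved pointwise in $s$ from the Ricci lower bound; it is obtained by using the entire continuity family $(\theta_s)_{s\in[0,t]}$ constructed in Proposition~\ref{solve0}.

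The paper's (i.e.\ Tian's) argument runs as follows. Differentiating \ref{news} in $s$ gives $\Delta_s\dot\theta=-\theta-s\dot\theta$, and then a short computation with Lemma~\ref{IJrel}(ii) yields
\[
\frac{d}{ds}(I-J)(\theta_s)=-\frac{1}{s}\frac{d}{ds}\Bigl(s\int_X\theta_s\,\omega_{\theta_s}^n\Bigr),
\]
equivalently $\frac{d}{ds}\bigl(s(I-J)(\theta_s)\bigr)-(I-J)(\theta_s)=-\frac{d}{ds}\bigl(s\int_X\theta_s\,\omega_{\theta_s}^n\bigr)$. Integrating from $0$ to $t$ (this is exactly why one first needs Proposition~\ref{solve0}) and dividing by $t$ gives
\[
(I-J)(\psi)-\frac{1}{t}\int_0^t(I-J)(\theta_s)\,ds=-\frac{1}{V}\int_X\psi\,\omega_\psi^n.
\]
Since $I-J\ge0$, the integral on the left is nonnegative, so $-\frac{1}{V}\int_X\psi\,\omega_\psi^n\le (I-J)(\psi)$. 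Now the constant $n$ drops out of the purely algebraic bound $I-J\le\frac{n}{n+1}I$ from \eqref{IJineq}: combining, $\frac{n}{n+1}I(\psi)\ge -\frac{1}{V}\int_X\psi\,\omega_\psi^n$, and expanding $I(\psi)=\frac{1}{V}\int_X\psi(\omega^n-\omega_\psi^n)$ and rearranging gives exactly $-\frac{1}{V}\int_X\psi\,\omega_\psi^n\le\frac{n}{V}\int_X\psi\,\omega^n$. No Ricci substitution or eigenvalue estimate enters this step; the role of $\lambda_1>s$ was only to guarantee openness (and monotonicity of $I-J$) along the path, not to produce the coefficient $n$.
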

\begin{proof}
 First by taking derivatives to equation \ref{news}, we get
\[
\Delta_s\dot{\theta}=-\theta-s\dot{\theta}
\]
So
\begin{eqnarray*}
 \frac{d}{ds}(I-J)(\theta_s)&=&-\int_X\theta\frac{d}{ds}\omega_\theta^n=-\frac{d}{ds}\left(\int_X\theta\omega_\theta^n\right)+\int_X\dot{\theta}\omega_\theta^n\\
&=&-\frac{d}{ds}\left(\int_X\theta\omega_\theta^n\right)-\frac{1}{s}\int_X\theta\omega_\theta^n=-\frac{1}{s}\frac{d}{ds} \left(s\int_X\theta\omega_\theta^n\right)
\end{eqnarray*}
So
\[
\frac{d}{ds}(s(I-J)(\theta_s))-(I-J)(\theta_s)=-\frac{d}{ds}\left(s\int_X\theta\omega_\theta^n\right)
\]
Since $\theta_s$ can be solved for $s\in [0,t]$, and $\theta_t=\psi=\psi_t$, we can integrate to get
\[
t(I-J)(\psi)-\int_0^t (I-J)(\theta_s)ds=-t\int_X\psi\omega_\psi^n
\]
Divide both sides by $t$ to get
\[
(I-J)(\psi)-\frac{1}{t}\int_0^t(I-J)(\theta_s)ds=-\int_X\psi\omega_\psi^n
\]
By lemma \ref{IJrel}.(i), we can get
\[
\frac{n}{n+1}\int_X\psi(\omega^n-\omega_\psi^n)=\frac{n}{n+1}I(\psi)\ge -\int_X\psi\omega_\psi^n
\]
\eqref{tianest} follows from this inequality imediately.
\end{proof}

Combine \eqref{tianest} with Bando-Mabuchi's estimate \eqref{infphi} when $s=t$, we then prove the Harnack estimate \eqref{Harnack}. So we can derive the
lower bound of $\psi$ from the upper bound of $\psi$ and $C^0$-estimate is obtained.

\begin{rem}\label{gap}
Professor Jian Song showed me that one can modify Tian's argument to prove Harnack inequality so that in the Proposition \ref{solve0} one only needs to solve \ref{news} for $s\in (\delta, t]$ with $\delta$ sufficiently small and in this way we can avoid the use of Berndtsson's recent uniqueness result. 
\end{rem}

\subsection{Partial \texorpdfstring{$C^2$}{C2}-estimate}
\ref{CMAt} is equivalent to
\[
Ric(\omega_\phi)=t\omega_\phi+(1-t)\omega
\]
From our transformation \eqref{transform}, we get
\begin{equation}\label{ricpsi}
Ric(\omega_\psi)=t\omega_\psi+(1-t)\sigma^*\omega
\end{equation}
In particular, $Ric(\omega_\psi)>t\omega_\psi$. We will some argument similar to that used by Ruan-Zhang (see the proof of Lemma 5.2 in \cite{RuZh})

Let $f=tr_{\omega_\psi}\omega$ and $\Delta'$ be the complex Laplacian associated with K\"{a}hler metric $\omega_\psi$. As in \cite{Yau2}, we can calculate
\begin{eqnarray*}
\Delta' f=g'^{i\bar{l}}g'^{k\bar{j}}R'_{k\bar{l}}g_{i\bar{j}}+g'^{i\bar{j}}g'^{k\bar{l}}T^{\alpha}_{i,k}T^{\beta}_{\bar{j},\bar{l}}g_{\alpha\bar{\beta}}-g'^{i\bar{j}}g'^{k\bar{l}}
S_{i\bar{j}k\bar{l}}
\end{eqnarray*}
Here the tensor $T^{\alpha}_{i,j}=\tilde{\Gamma}^{\alpha}_{i j}-\Gamma^{\alpha}_{ij}$ is the difference of Levi-Civita connections $\tilde{\Gamma}$ and $\Gamma$ associated with $g_{\omega}$ and $g'=g_{\omega_\psi}$ respectively. $R'_{k\bar{j}}$ is the Ricci curvature of $\omega_\psi$ and $S_{i\bar{j}k\bar{l}}$ is the curvature of reference metric $\omega$. Let $\nabla'$ be the gradient operator associated with $g_{\omega_\psi}$, then
\begin{eqnarray}\label{schwarz}
\Delta'\log f&=&\frac{\Delta'f}{f}-\frac{|\nabla'f|_{\omega_\psi}^2}{f^2}\nonumber\\
&=&\frac{g'^{i\bar{l}}g'^{k\bar{j}}R'_{k\bar{l}}g_{i\bar{j}}}{f}-\frac{g'^{i\bar{j}}g'^{k\bar{l}}
S_{i\bar{j}k\bar{l}}}{f}+\frac{g'^{i\bar{j}}g'^{k\bar{l}}T^{\alpha}_{i,k}T^{\beta}_{\bar{j},\bar{l}}g_{\alpha\bar{\beta}}}{f}-\frac{g'^{p\bar{q}}g'^{i\bar{j}}g'^{k\bar{l}}
T^{\alpha}_{ip}T^{\bar{\beta}}_{\bar{l}\bar{q}}g_{\alpha\bar{j}}g_{k\bar{\beta}}}{f^2}\nonumber\\
&=&\frac{\sum_i\mu_i^{-2}R_{i\bar{i}}}{\sum_i\mu_i^{-1}}-\frac{\sum_{i,j}\mu_i^{-1}\mu_j^{-1}S_{i\bar{i}j\bar{j}}}{\sum_i\mu_i^{-1}}+
\frac{\sum_{i,k,\alpha}\mu_i^{-1}\mu_k^{-1}|T^\alpha_{ik}|^2}{\sum_i\mu_i^{-1}}-\frac{\sum_p \mu_p^{-1}|\sum_{i}\mu_i^{-1}T^{i}_{ip}|^2}{(\sum_i\mu_i^{-1})^2}\nonumber\\
&\ge& t-C \sum_i\mu_i^{-1}=t-C f
\end{eqnarray}
In the 3rd equality in \eqref{schwarz}, for any fixed point $P\in X$, we chose a coordinate near $P$ such that $g_{i\bar{j}}=\delta_{ij}$, $\partial_kg_{i\bar{j}}=0$. We can assume
$g'=g_{\omega_\psi}$ is also diagonalized so that
\[
g'_{i\bar{j}}=\mu_i\delta_{ij}, \quad\mbox{with } \mu_i=1+\psi_{i\bar{i}}
\]
For the last inequality in \eqref{schwarz}, we used $Ric(\omega_\psi)>t\omega_\psi$ and the inequality:
\begin{eqnarray*}
\sum_{p}\mu_p^{-1}|\sum_i\mu_i^{-1}T^{i}_{ip}|^2&=&\sum_p\mu_p^{-1}\left|\sum_i\mu_i^{-1/2}T^{i}_{ip}\mu_i^{-1/2}\right|^2\\
&\le& (\sum_{p,i}\mu_p^{-1}\mu_i^{-1}|T^{i}_{ip}|^2)(\sum_i\mu_i^{-1})\\
&\le& (\sum_{p,i,\alpha}\mu_p^{-1}\mu_i^{-1}|T^{\alpha}_{ip}|^2)(\sum_i\mu_i^{-1})
\end{eqnarray*}
So
\[
\Delta'(\log f-\lambda\psi)\ge t-Cf-\lambda tr_{\omega_\psi}(\omega_\psi-\omega)=(\lambda-C)f-(\lambda n-t)=C_1 f-C_2
\]
for some constants $C_1>0$, $C_2>0$, if we choose $\lambda$ to be sufficiently large. So at the maximum point $P$ of the function $\log f-\lambda\psi$,
we have
\[
0\ge\Delta'(\log f-\lambda\psi)(P)\ge C_1f(P)-C_2
\]
So
\[
f(P)=tr_{\omega_\psi}(\omega)(P)\le C_3
\]
So for any point $x\in X$, we have
\[
tr_{\omega_\psi}\omega(x)\le C_3 e^{\lambda(\psi(x)-\psi(P))}\le C_3e^{\lambda osc(\psi)}
\]
By $C^0$-estimate of $\psi$, we get the estimate
$
tr_{\omega_{\psi}}\omega\le C_4
$. So $\omega_\psi\ge C_4\omega$, i.e. $\mu_i\ge C_4$.

Now by \eqref{c0TCMAt},
\[
\prod_j\mu_j=\frac{\omega_\psi^n}{\omega^n}=e^{-t\psi+F-B}
\]
with $F=h-w(x_t)$ and $B=(1-t)\log\left(\sum_\alpha b(p_\alpha,t)\|s_\alpha\|^2\right)$.
So by the $C^0$-estimate of $\psi$, we get
\[
\mu_i=\frac{\prod_j\mu_j}{\prod_{j\neq i}\mu_j}\le\frac{e^{-t\psi+F-B}}{C_4^{n-1}}\le C_5 e^{-B}
\]
In conclusion, we get the partial $C^2$-estimate
\begin{equation}\label{ellip}
C_4\omega\le \omega_\psi\le C_5 e^{-B}\omega
\end{equation}
\begin{rem}
The partial $C^2$-upper bound $\omega_\psi\le C_5 e^{-B}\omega$ can also be proved by maximal principle. In fact, let
\begin{equation}\label{Lambda}
\Lambda=\log(n+\Delta\psi)-\lambda\psi+B
\end{equation}
where $\Delta=\Delta_\omega$ is the complex Laplacian with respect to reference metric $\omega$.
Then by standard calculation as in Yau \cite{Yau}, we have
\begin{eqnarray}\label{2ndcal}
\Delta'\Lambda&\ge&\left(\inf_{i\neq j}S_{i\bar{i}j\bar{j}}+\lambda\right)\sum_i\frac{1}{1+\psi_{i\bar{i}}}+\left(\Delta F-\Delta B-t\Delta\psi-n^2\inf_{i\neq j}S_{i\bar{i}j\bar{j}}\right)\frac{1}{n+\Delta\psi}-\lambda n+\Delta' B\nonumber\\
&=&\left(\inf_{i\neq j}S_{i\bar{i}j\bar{j}}+\lambda\right)\sum_i\frac{1}{1+\psi_{i\bar{i}}}+\left(\Delta F+nt-n^2\inf_{i\neq j}S_{i\bar{i}j\bar{j}}\right)\frac{1}{n+\Delta\psi}+\nonumber\\
&&+\sum_{i}B_{i\bar{i}}\left(\frac{1}{1+\psi_{i\bar{i}}}-\frac{1}{n+\Delta\psi}\right)-(\lambda n+t)
\end{eqnarray}

Since for each $i$,
$\frac{1}{n+\Delta\psi}\le\frac{1}{1+\psi_{i\bar{i}}}$, so $\frac{1}{n+\Delta\psi}\le \frac{1}{n}\sum_{i}\frac{1}{1+\psi_{i\bar{i}}}$. So
the second term on the right of \eqref{2ndcal} is bounded below by $-C_0\sum_i\frac{1}{1+\psi_{i\bar{i}}}$ for some positive constant $C_0>0$

For the 3rd term, we observe from \eqref{transform} and \eqref{transdif} that
\[
\partial\bar{\partial}B=(1-t)(\sigma^*\omega-\omega)\ge -(1-t)\omega
\]
So, since again $\frac{1}{n+\Delta\psi}\le\frac{1}{1+\psi_{i\bar{i}}}$, we have
\[
B_{i\bar{i}}\left(\frac{1}{1+\psi_{i\bar{i}}}-\frac{1}{n+\Delta\psi}\right)\ge -(1-t)\left(\frac{1}{1+\psi_{i\bar{i}}}-\frac{1}{n+\Delta\psi}\right)
\ge-(1-t)\frac{1}{1+\psi_{i\bar{i}}}
\]
By the above discussion, at the maximal point $P_t$ of $\Lambda$, we have
\begin{equation}\label{dellam}
0\ge\Delta'\Lambda\ge (\lambda+\inf_{i\neq j}S_{i\bar{i}j\bar{j}}-C_0-(1-t))\sum_{i}\frac{1}{1+\psi_{i\bar{i}}}-(\lambda n+t)=C_2\sum_i\frac{1}{1+\psi_{i\bar{i}}}-C_3
\end{equation}
for some constants $C_2>0$, $C_3>0$, by choosing $\lambda$ sufficiently large.

Now we use the following inequality from \cite{Yau}
\begin{eqnarray}\label{trick}
\sum_i\frac{1}{1+\psi_{i\bar{i}}}&\ge& \left(\frac{\sum_i(1+\psi_{i\bar{i}})}{\prod_j(1+\psi_{j\bar{j}})}\right)^{1/(n-1)}=(n+\Delta\psi)^{1/(n-1)}
e^{\frac{B-F+t\psi}{n-1}}\nonumber\\
&=&e^{\frac{\Lambda}{n-1}}e^{\frac{-F+(t+\lambda)\psi}{n-1}}
\end{eqnarray}
By \eqref{dellam} and \eqref{trick}, we get the bound
\[
e^{\Lambda(P_t)}\le C_4 e^{-(t+\lambda)\psi(P_t)}
\]
So we get estimate that for any $x\in X=X_\triangle$,
\[
(n+\Delta\psi)e^{-\lambda\psi}e^{B}\le e^{\Lambda(P_t)}\le C_4 e^{-(t+\lambda)\psi(P_t)}
\]
Since we have $C^0$-estimate for $\psi$, we get partial $C^2$-upper estimate:
\begin{equation}\label{partialc2}
(n+\Delta\psi)(x)\le C_4 e^{-(t+\lambda)\psi(P_t)}e^{\lambda\psi(x)}e^{-B}\le C_5 \left(\sum_\alpha b(p_\alpha,t)\|s_\alpha\|^2\right)^{-(1-t)}
\end{equation}
In particular,
\[
1+\psi_{i\bar{i}}\le C_5 e^{-B}
\]
which is same as $\omega_\psi\le C_5 e^{-B}$.
\end{rem}

\subsection{Higher order estimate and completion of the proof of Theorem \ref{thm2}}
For any compact set $K\subset X\backslash D$, we first get the gradient estimate by interpolation inequality:
\begin{equation}\label{gradest}
\max_{K}|\nabla\psi|\le C_K(\max_K \Delta\psi+\max_K|\psi|)
\end{equation}
Next, by the complex version of Evans-Krylov theory \cite{T5}, we have a uniform $C_{2,K}>0$, such that $\|\psi\|_{C^{2,\alpha}(K)}\le C_{2,K}$ sor some $\alpha\in(0,1)$. Now take derivative to the equation:
\[
\log\det(g_{i\bar{j}}+\psi_{i\bar{j}})=\log\det (g_{i\bar{j}})-t\psi+F-B
\]
to get
\begin{equation}\label{linearize}
g'^{i\bar{j}}\psi_{i\bar{j},k}=-t\psi_k+F_k-B_k+g^{i\bar{j}}g_{i\bar{j},k}-g'^{i\bar{j}}g_{i\bar{j},k}
\end{equation}
By \eqref{ellip}, \eqref{gradest} and $\|\psi\|_{C^{2,\alpha}(K)}\le C_{2,K}$,  \eqref{linearize} is a linear elliptic equation with $C^{\alpha}$ coefficients. By Schauder's estimate, we get $\|\psi_k\|_{C^{2,\alpha}}\le C$, i.e. $\|\psi\|_{C^{3,\alpha}}\le C$. Then we can iterate in \eqref{linearize} to get $\|\psi\|_{C^{r,\alpha}}\le C$ for any $r\in\mathbb{N}$. So we see that $(\psi=\psi(t))_{t< R(X)}\subset C^{\infty}(X\backslash D)$ is precompact in the smooth topology.

Now we can finish the proof of Theorem \ref{thm2} using argument from \cite{DP}
\begin{proof}[Proof of Theorem \ref{thm2}]
The uniform estimate $\|\psi\|_{L^\infty}$ implies the existence of a $L^1$-convergent sequence $(\psi_j=\psi_{t_j})_j$, $t_j\uparrow R(X)$ with limit $\psi_\infty\in \mathcal{PSH}(\omega)\cap L^\infty(X)$. We can assume that a.e.-convergence holds too. The precompactness of the family $(\psi_j)\subset C^\infty(X\backslash D)$ in the smooth topology implies the convergence of the limits over $X\backslash D$:
\begin{eqnarray*}
(\omega+\partial\bar{\partial}\psi_\infty)^n&=&\lim_{t_j\rightarrow R(X)}(\omega+\partial\bar{\partial}\psi_j)^n\\
&=&\lim_{t_j\rightarrow R(X)}e^{-t_j\psi_{t_j}}\left(\sum_\alpha b(p_\alpha,t_j)\|s_\alpha\|^2\right)^{-(1-t_j)}e^{h_\omega-w(x_{t_j})}\omega^n\\
&=&e^{-R(X)\psi_\infty} \left(\sum_\alpha{}'b_\alpha\|s_\alpha\|^2\right)^{-(1-R(X))}e^{h_\omega-c}\omega^n
\end{eqnarray*}

The fact that $\psi_\infty$ is a bounded potential implies that the global complex Monge-Amp\`{e}re measure $(\omega+\partial\bar{\partial}\psi_\infty)^n$ does not carry any mass on complex analytic sets. This follows from pluripotential theory (\cite{Klimek}) because complex analytic sets are pluripolar.  We conclude that $\psi_\infty$ is a global bounded solution of the complex Monge-Amp\`{e}re equation \ref{cxlim1} which belongs to the class
$\mathcal{PSH}(\omega)\cap L^{\infty}(X)\cap C^\infty(X\backslash D)$.
\end{proof}

\section{Example}\label{Example}
\begin{exmp}\label{example1}
$X_\triangle=Bl_p\mathbb{P}^n$. The polytope $\triangle$ is defined by
\[
x_i\ge -1, i=1,\cdots,n; \quad \sum_ix_i\ge -1; \quad\mbox{and}\quad -\sum_ix_i\ge-1
\]
Using the symmetry of the polytope, we can calculate that
\[
Vol(\triangle)=\frac{1}{n!}((n+1)^n-(n-1)^n)
\]
\[
P_c=\left(x_i=\frac{2(n-1)^n}{(n+1)((n+1)^n-(n-1)^n)}\right), \quad\mbox{and}\quad Q=\left(x_i=-\frac{1}{n}\right)
\]
So
\[
R(X_\triangle)=\frac{|\overline{OQ}|}{|\overline{P_cQ}|}=\left(1+\frac{|\overline{OP_c}|}{|\overline{OQ}|}\right)^{-1}=
\frac{(n+1)((n+1)^n-(n-1)^n)}{(n+1)^{(n+1)}+(n-1)^{(n+1)}}
\]
$\mathcal{F}$ is the (n-1)-dimensional simplex with vertices
\[
P_i=(-1,\cdots,\stackrel{i-th\; place}{n-2},\cdots,-1)\quad i=1,\cdots, n
\]
Let $e_j$ be the j-th coordinate unit vector, then $\langle P_i,e_j\rangle=-1$ for $i\neq j$. $\langle P_i, e_i\rangle=n-2$. $\langle P_i, \pm(1,\cdots,1)\rangle=\mp 1$. So $P_i$ corresponds to a holomorphic section $s_i$ with $\{s_i=0\}=(n-1)D_i+2D_\infty$, where $D_i$ is the toric divisor corresponding to the codimension one face with inward normal $e_i$. 

It's easy to see that $Bs(\mathfrak{L}_\mathcal{F})=2 D_\infty $. $D_\infty$ is the toric divisor corresponding to the simplex face with vertices $Q_i=(-1,\cdots,n,\cdots,-1)$.
If we view $X_\triangle$ as the projective compactification of $\mathcal{O}(-1)\rightarrow\mathbb{P}^{n-1}$, then $D_\infty$ is just the divisor added at infinity. So the limit metric should have conic singularity along $D_\infty$ with conic angle
\[
\theta=2\pi\times (1-(1-R(X))\times 2)=2\pi\frac{(n+1)^{n+1}-(3n+1)(n-1)^n}{(n+1)^{n+1}+(n-1)^{n+1}}
\]
In particular, if $n=2$, i.e. $X_\triangle=Bl_p\mathbb{P}^2$ which is the case of the figure in the Introduction, then
\[
R(X_\triangle)=\frac{6}{7},\quad \theta=2\pi\times\frac{5}{7}
\]
This agrees with the results of \cite{S} and \cite{ShZh}. In fact, the results in \cite{S} and \cite{ShZh} can be easily generalized to $Bl_p\mathbb{P}^n$ which give the same results as here.

\end{exmp}
\begin{exmp}\label{example2}
$X_\triangle=Bl_{p,q}\mathbb{P}^2$,
$P_c=\frac{2}{7}(-\frac{1}{3},-\frac{1}{3})$,
$-\frac{21}{4}P_c\in\partial\triangle$, so
$R(X_\triangle)=\frac{21}{25}$.

$\mathcal{F}=\overline{Q_1Q_2}$. $Q_1$ corresponds to holomorphic section $s_1$ with $\{s_1=0\}=2D_1+D_2$. $Q_2$ corresponds to $s_2$ with $\{s_2=0\}=D_1+2D_2$.  The fixed components in $Bs(\mathfrak{L}_\mathcal{F})$ are $D_1+D_2$. $D_1$ and $D_2$ are the divisors corresponding to the faces
$\overline{Q_4Q_3}$ and $\overline{Q_4Q_5}$ respectively. So at generic point of $D_1$ (or $D_2$), the conic angle along $D_1$ (or $D_2$) should be
\[
2\pi\times (1-(1-\frac{21}{25})\times 1)=2\pi\times \frac{21}{25}
\]
While around the point $p=D_1\cap D_2$, if we choose local coordinate around $p$ such that  $D_1=\{z_1=0\}$ and $D_2=\{z_2=0\}$, the ideal defining the base locus is $(z_1^2z_2, z_1z_2^2)=(z_1)(z_2)(z_1, z_2)$.  the limit singular Monge-Amp\`{e}re equation locally looks like
\[
(\omega+\partial\bar{\partial}\psi)^n=\frac{f}{|z_1|^{2\alpha}|z_2|^{2\alpha}(|z_1|^2+|z_2|^2)^{\alpha}}
\]
where $f$ is a nonvanishing smooth function near $p$ and $\alpha=1-R(X)=\frac{4}{21}$.  The author does not know a candidate singular K\"{a}hler metric as local model yet. See Remark \ref{overlook}. 
\end{exmp}
\vspace*{-20mm}
\begin{figure}[h]
\begin{center}
\setlength{\unitlength}{0.6mm}
\begin{picture}(60,60)
\put(-30,0){\line(1,0){60}} \put(0,-30){\line(0,1){60}}
\put(-21,-21){\line(0,1){42}} \put(-21,21){\line(1,0){21}}
\put(0,21){\line(1,-1){21}} \put(21,0){\line(0,-1){21}}
\put(-21,-21){\line(1,0){42}} \put(-2,-2){\line(1,1){12.5}}
\put(-2,-2){\circle*{2}} \put(10.5,10.5){\circle*{2}}
\put(-8,-8){$P_c$} \put(12,12){$-\frac{21}{4}P_c$}

\put(-21,21){\circle*{1}} \put(-21,-21){\circle*{1}} \put(21,-21){\circle*{1}}
\put(0,21){\circle*{1}}   \put(21,0){\circle*{1}}

\put(-23,23){$Q_5$} \put(-28,-23){$Q_4$} \put(23,-23){$Q_3$}
\put(1,23){$Q_1$} \put(22,2){$Q_2$}
\end{picture}
\end{center}
\end{figure}
\vspace{20mm}

\newpage

Department of Mathematics, Princeton University, Princeton, NJ
08544, USA

E-mail address: chil@math.princeton.edu
\end{document}